\newtheorem{remark}{Remark}
\newtheorem{theorem}{Theorem}
\newtheorem{definition}{Definition}
\newtheorem{prop}{Proposition}
\def\eps{\varepsilon}
\def\St{\te{\Sigma}}
\def\vvt{\ve{\varphi}}
\def\intd{\int_\Omega}
\def\intt{\int_0^T}
\def\I{\te{I}}
\def\T{\te{T}}
\def\D{\te{D}}
\def\W{\te{W}}
\def\div{\mathrm{div\,}}
\def\tr{\mathrm{tr\,}}
\newcommand\pd[2]{\frac{\partial #1}{\partial #2}}
\newcommand{\ve}[1]{\boldsymbol{#1}}
\newcommand{\te}[1]{\boldsymbol{#1}}
\newcommand{\vv}{\te{v}}
\def\gradv{\nabla \vv}
\def\ddt{\frac{\mathrm{d}}{\mathrm{dt}}}
\def\pdt{\frac{\partial}{\partial t}}
\def\dx{\,\mathrm{d}x}
\def\dt{\,\mathrm{d}t}
\def\grad{\nabla}
\def\S{\te{S}}
\def\q{\ve{q}}
\def\rho{\varrho}
\def\12{\frac{1}{2}}
\newcommand\be[2]{\begin{equation}\label{#1}#2 \end{equation}}
\title{Stress-diffusive regularizations\\ of non-dissipative rate-type materials }
\author[1,2]{Jan Burczak}
\author[3]{Josef M\'alek}
\author[4,5]{Piotr Minakowski}
\affil[1]{Institute of Mathematics, Polish Academy of Sciences, \'Sniadeckich 8, 00-656 Warsaw, Poland
}
\affil[2]{OxPDE, Mathematical Institute, University of Oxford, Andrew Wiles Building, Radcliffe Observatory Quarter, Woodstock Road, Oxford OX2 6GG, United Kingdom
}
\affil[3]{Charles University, Faculty of Mathematics and Physics, Mathematical Institute, Sokolovsk\'{a} 83, 186~75 Prague 8, Czech Republic}
\affil[4]{Heidelberg University, Interdisciplinary Center for Scientific Computing, Im Neuenheimer Feld 205, 69120 Heidelberg, Germany}
\affil[5]{University of Warsaw, Institute of Applied Mathematics and Mechanics,  Banacha 2, 02-097 Warsaw, Poland}
\begin{document}
\maketitle

\centerline{\emph{To Tom\'{a}\v{s} Roub\'\i{}\v{c}ek on the occasion of his 60th birthday.}}
\bigskip

\begin{abstract}

We consider non-dissipative (elastic) rate-type material models that are derived within the Gibbs-potential-based thermodynamic framework. 
Since the absence of any dissipative mechanism in the model prevents us from establishing even a local-in-time existence result in 
two spatial dimensions for a spatially periodic problem, we propose two regularisations. For such regularized problems we obtain well-posedness 
of the planar, spatially periodic problem. In contrast with existing results, we prove ours for a regularizing term present solely in the 
evolution equation for the stress.
\end{abstract}
	
\section{Introduction}\label{sec1}

Elastic materials are bodies that are not capable of producing entropy or, in a purely mechanical context, of dissipating energy. 
Due to this characterization they are called non-dissipative materials.

Starting from this thermodynamic point of view and from the assumption that the mechanism in
which a material stores energy
is encoded into the constitutive equation for the Gibbs potential, whereby the Gibbs potential is a function of the Cauchy stress, 
Rajagopal and Srinivasa have in a series of papers (see in particular \cite{Rajagopal2009, Rajagopal2011}) extended the framework 
of elasticity to rate-type materials; see Rajagopal \cite{RajA, RajB, RajC} for further details including the references and comments to earlier achievements, in particular to the concept of hypoelasticity introduced in
Truesdell \cite{Truesdell1955}. 
Besides providing a new class of non-dissipative bodies, the advantage of this approach lies in the fact that it only uses quantities 
defined in the current configuration. Consequently it does not require introducing notions of a reference state or any type of 
strain. Hence a fully Eulerian theory of elasticity is applicable, for example, to the processes concerning biological matter where, 
due to the fact that cells are born and die, it is reasonable to consider only quantities at a current time and at a given position.

This fully Eulerian Gibbs-potential-based thermodynamic approach has been further extended to describe the response of visco-elastic materials, 
see Rajagopal and Srinivasa \cite{Rajagopal2011}, or to model severe plastic deformations of a crystalline solid treated as a material flow through an adjustable crystal lattice space, see Kratochv\'\i{}l et al. \cite{Minak2016}.

Our original intention has been to develop a mathematical theory for initial and boundary-value problems involving such a class of elastic 
(non-dissipative) models. To be more specific, restricting ourselves to materials where the density is uniform and considering only 
isothermal processes, we wish to analyse, in a $d$-dimensional domain $\Omega$, the following set of partial differential equations
(PDEs):
\begin{subequations}\label{syso}
\begin{align}
\div\vv &= 0, \label{sysoa}\\
\pd{\vv}{t} + (\vv\cdot\grad)\vv   + \nabla p &= \div\S, \label{sysob}\\
\pd{\S}{t} + (\vv \cdot \nabla)\S + \S\W -\W\!\S  &= \D,\label{sysod}
\end{align}
\end{subequations}
where $\vv = (v_1,\dots,v_d)$ stands for the velocity, $p$ for the spherical part of the Cauchy stress (the pressure), $\S = (S_{ij})_{i,j=1}^d$ 
for the deviatoric part of the Cauchy stress\footnote{Divided by the constant density.} that is supposed to be symmetric ($\S=\S^T$), 
$\D$ and $\W$ stand for the symmetric and antisymmetric parts of the velocity gradient, i.e. by definition $\D = \12(\gradv+\gradv^T)$ 
and $\W = \12(\gradv-\gradv^T)$, respectively. The symbol $(\vv\cdot\grad)$ signifies the operator $\sum_{k=1}^{d} v_k \frac{\partial}{\partial x_k}$.

In order to understand the basic mathematical features of \eqref{syso}, we eliminate the influence of the boundary by assuming that $\Omega$ is a 
periodic cell and by considering $\vv$, $p$, $\S$ that are $\Omega-$periodic. Since\footnote{The energy identity \eqref{ener} is obtained by adding 
the result of scalar multiplication of \eqref{sysob} by $\vv$ to the result of scalar multiplication of \eqref{sysod} by $\S$, see also 
Sections \ref{1stae} and \ref{app:md} for details.} 
\be{ener}{\|\vv(t)\|_{L^2}^2 + \|\S(t)\|_{L^2}^2 = \|\vv(0)\|_{L^2}^2 + \|\S(0)\|_{L^2}^2 \quad \textrm{ for all } t\in[0,T]} 
and in addition\footnote{This feature has been successfully exploited in \cite{Lions2000} for establishing the first global-in-time existence 
result for large data with respect to rate-type visco-elastic fluid models in three-dimensions.} 
\be{M100}{(\S\W-\W\!\S):\S = 0, } one may, at the first glance, pose a conjuncture that the existence theory for the Euler equation (obtained 
formally by setting $\S=\te{O}$ in \eqref{sysob}), as developed for example in Kato \cite{Kato1967}, can be successfully extended to \eqref{syso}. 

As indicated in the Appendix, this approach to developing local-in-time existence theory seems to be inapplicable to \eqref{syso} even in two spatial 
dimensions. Consequently we leave this conjecture as an interesting open problem and propose to study two different regularisations obtained by 
adding the terms $-\eps\Delta \pdt \S$ or $- \eps\Delta \S$ to \eqref{sysod}: 
$$
\pd{\S}{t} + (\vv \cdot \nabla)\S + \S\W -\W\!\S -\eps \Delta \pdt \S = \D  \quad \text{ or } \quad \pd{\S}{t} + (\vv \cdot \nabla)\S + \S\W -\W\!\S - \eps \Delta  \S = \D.
$$

For the first regularization, we observe that, instead of \eqref{ener}, one has
\be{enerI}{\|\vv(t)\|_{L^2}^2 + \|\S(t)\|_{L^2}^2 + \eps \|\nabla \S (t) \|_2^2 = \|\vv(0)\|_{L^2}^2 + \|\S(0)\|_{L^2}^2 + \eps \|\nabla \S (0) \|_2^2 
\quad \textrm{ for all } t\in[0,T].}
This information turns out to be strong enough for establishing the long-time existence and uniqueness of a weak solution possessing certain higher 
regularity. We are not aware of any physical meaning of this type of regularization. 

The second, 
weaker regularization leads to 
\be{enerII}{\|\vv(t)\|_{L^2}^2 + \|\S(t)\|_{L^2}^2 + 2 \eps \int_0^t \|\nabla \S (t) \|_2^2 \dt = \|\vv(0)\|_{L^2}^2 + \|\S(0)\|_{L^2}^2 \quad 
\textrm{ for all } t\in[0,T],} 
and it suffices for a short-time existence result or a global existence result for small initial data. A physical justification for the diffusive 
regularizing term can be found in the literature. 
For example, as was pointed out in \cite{Barrett2011, Suli2011, Lu2016}, an Oldroyd–B type model with stress diffusion can be derived 
from a Navier–Stokes–Fokker–Planck system arising in the kinetic theory of dilute polymeric fluids, where polymer chains immersed 
in a barotropic, incompressible, isothermal, viscous Newtonian solvent, are idealized as pairs of massless beads connected with 
Hookean springs. Moreover, non-dimensionalization leads to the conclusion that the dissipation parameter $\eps$ takes the values 
in the interval $(10^{-9}, 10^{-7})$ and is thus almost negligible.

Besides the goal to identify \eqref{syso} as an interesting model of elasticity worthy of further mathematical investigation, 
the aim of this paper is to show well-posedness for these two regularized problems. 

The paper is organised as follows. We first recall, still in Section \ref{sec1}, the derivation of \eqref{syso} based on a Gibbs-potential-based 
thermodynamical framework. We also provide a brief overview regarding the PDE analysis of rate-type visco-elastic models. 
Then, in Section \ref{sec2} we formulate Theorem 1 and Theorem 2 concerning well-posedness of the regularized problems considered. 
We prove these results in subsequent sections. 

\subsection{Gibbs-potential-based thermodynamic derivation of \eqref{syso}}

Let a body, considered at the current instant $t$, be identified with a bounded open set $\Omega \subset \Re^d$. The position of any particle at the current 
instant is denoted by $\ve{x}$ and its velocity by $\vv$. The mass density of the material is denoted by $\varrho$ and the Cauchy 
stress by $\T$. The governing balance equations for mass, linear and angular momenta (in the absence of body forces) and energy 
(in the absence of heat sources) as well as the formulation of the second law of thermodynamics take the following form:
\begin{subequations}\label{sysn}
\begin{eqnarray}
\dot \varrho +\varrho\, \text{div\,}\vv &=& 0,\label{mass} \\
\varrho\, \dot{\vv}  &=&  \text{div\,}\,\T, \qquad \T=\T^T\,, \label{balance} \\
\varrho\, \dot{\epsilon} &=& \T:\D - \text{div}\,\ve{\tilde{q}}, \label{energy_balance} \\
\varrho\zeta \!\!&:=& \varrho \dot{\eta} + \text{div} \left(\frac{\ve{\tilde{q}}}{\theta}\right) \quad \textrm{ and } \quad \zeta\ge 0, \label{entropy_balance}
\end{eqnarray}
\end{subequations}
where the material time derivative of a scalar function $z$ is given by $\dot{z}= \pdt{z} + (\vv \cdot\nabla) z $ (for a vector and tensor-valued 
function, the same relation is applied to each component). In the above equations, $\epsilon$ stands for the specific internal energy, 
$\ve{\tilde{q}}$ for the heat flux vector, $\eta$ for the specific entropy, $\theta$ for the temperature and $\zeta$ for the specific 
rate of entropy production; here we tacitly assume that the entropy flux is of the form $\ve{\tilde{q}}/\theta$. 

We shall consider incompressible materials with uniform density, i.e.,
$$\div\vv=\tr\D =0 \text{ and } \varrho \text{ is constant.}$$
Decomposing the Cauchy stress as 
$$\T = -\tilde{p}\I + \T_\delta, \text{ where } \tilde{p}:= -\frac{1}{d}\tr \T,$$
and setting $\q:=\tilde{\q}/\varrho$, $p:=\tilde{p}/\varrho$ and $\S := \T_\delta/\varrho$,
the governing system of equations \eqref{mass}-\eqref{energy_balance} reduces to 
\begin{equation}\label{sysn2}
\text{div\,}\vv = 0,\quad \dot{\vv}  = -\nabla {p} + \text{div}\,\S,\quad \dot\epsilon = \S:\D -\div\q.
\end{equation}

Next, let us introduce the specific Helmholtz free energy $\psi$ and the specific rate of dissipation  $\xi$ through 
$$\psi := \epsilon - \theta \eta, \quad\textrm{ and } \quad \xi := \theta\zeta.$$
With this notation, the equations \eqref{entropy_balance} and \eqref{sysn2}$_3$ lead to the following equation for the rate of dissipation:
\begin{equation}
\xi = \S:\D - \dot{\psi} - \eta \dot{\theta} - \ve{q}\cdot \frac{\nabla\theta}{\theta} \quad \textrm{ and } \quad \xi\ge 0. \label{entropy_balance_2}
\end{equation}
 
Following the Gibbs-potential-based thermodynamic framework as developed by Rajagopal and Srinivasa in \cite{Rajagopal2011}, we assume 
that the specific Gibbs potential, denoted by $G$, is a function of the temperature $\theta$ and $\S$, i.e.,
\begin{equation}
 G(t,x) = {G}(\theta(t,x), \S(t,x)) \qquad  \textrm{ or briefly } \qquad G = {G}(\theta, \S) \label{Gib_1}
\end{equation}
We also require that the Helmholtz free energy, the internal energy and the entropy, considered as functions of $\theta$ and $\S$, satisfy
\begin{equation}
\begin{split}
\psi(\theta, \S) &= G(\theta, \S) - \frac{\partial G (\theta,\S)}{\partial \S} : \S, \\
\epsilon(\theta, \S) &= G(\theta, \S) - \frac{\partial G(\theta, \S)}{\partial\S}:\S - \frac{\partial G(\theta, \S)}{\partial\theta}\theta, \\
\eta (\theta, \S) &= - \frac{\partial G(\theta, \S)}{\partial\theta}.
\end{split}\label{int_energy}
\end{equation}
Inserting the first and third of these relations into \eqref{entropy_balance_2}, we obtain 
\begin{equation}
\xi = \S:\left\{\D + \frac{\partial^2 G}{\partial \S^2}\dot{\S} + \frac{\partial^2 G}{\partial \S \partial \theta}\dot{\theta} \right\} - \ve{q}\cdot \frac{\nabla\theta}{\theta}\quad \textrm{ and } \quad \xi\ge 0. \label{entropy_balance_3}
\end{equation}

In what follows, we restrict ourselves to isothermal processes. Then the equation \eqref{entropy_balance_3} reduces to  
\begin{equation}
\xi = \S:\D + \S:\frac{\partial^{2} G}{\partial\S^{2}}\dot{\S} \quad \textrm{ and } \quad \xi\ge 0. \label{dissipation}
\end{equation}
We thus arrive at a representation of thermodynamics associated with the specification of the Gibbs potential (as given in \eqref{Gib_1}). 
The achieved form of \eqref{dissipation} has, however, the following defficiency: while $\D$ and $\S$ are both objective tensors, 
$\dot{\S}$ and consequently $({\partial^{2} G})/{(\partial\S^{2})}\, \dot{\S}$ are not objective tensors.

In \cite{Rajagopal2011}, Rajagopal and Srinivasa propose two approaches  to overcome this difficulty. While the second one is more general 
and provides a possibility to include anisotropic responses, we shall recall the first approach here, as it is the simplest way for completing 
the derivation of the system \eqref{syso} considered.

Let us first consider a particular form of the Gibbs potential, namely $G(\S) = -\12 |\S|^2$. Then \eqref{dissipation} simplifies to
\begin{equation}
\xi = \S:(\D - \dot{\S}) \quad \textrm{ and } \quad \xi\ge 0. \label{dissipation_new}
\end{equation}
Using the orthogonality condition \eqref{M100}, we easily observe that \eqref{dissipation_new} can be rewritten as 
\begin{equation}
\xi = \S: (\D - \dot{\S} - \S\W + \W\!\S) \quad \textrm{ and } \quad \xi\ge 0. \label{dissipation_new2}
\end{equation}
A remarkable difference between \eqref{dissipation_new} and \eqref{dissipation_new2} is that the
term $\dot{\S} + \S\W - \W\!\S$ in \eqref{dissipation_new2} is objective while $\dot{\S}$ in \eqref{dissipation_new} is not. 

Requiring further that the dissipation rate $\xi$ in \eqref{dissipation_new2} vanishes for arbitrary $\S$, we obtain 
\begin{equation*}
\dot{\S} + \S\W - \W\!\S = \D, \end{equation*}
which is \eqref{sysod}. The other governing equations, namely \eqref{sysoa} and \eqref{sysob}, are stated in \eqref{sysn2}.

\subsection{An overview of known results}\label{sec:results}

As follows from the above derivation, there are no dissipative terms present in \eqref{syso}. Consequently, the structure of the equation 
\eqref{sysob} seems identical to the Euler equations with the external force $\div\S$. Therefore, the results regarding the solvability 
of the Euler equation might be important in the context of the analysis of our problem. Unfortunately, the available local-in-time 
existence and uniqueness results for the Euler equation (see in particular \cite{Lichtenstein1925, Gunther1927, Wolibner1933, Kato1967, Pulvirenti1994}) 
do not seem to be applicable to \eqref{syso} due to the fact that the right-hand side of \eqref{sysob} is not regular enough. The difficulties connected with this approach are presented in Appendix.

Alternatively, one could follow a recent approach developed by DeLellis and Sz\'{e}kelyhidi (see \cite{LellisSzekehylidi2011} and \cite{LellisSzekehylidi2013}), 
based on the convex integration and Baire's  category principle, that provides the global-in-time existence of (infinitely many) weak solutions to the Euler 
system for a subset of initial data that is dense in $L^2(\Omega)_{div}$, see in particular Wiedemann \cite{Wiedemann2011}. This result has been strengthened 
by Chiodaroli, Feireisl and Kreml in \cite{ChioEFKreml2015} who considered the compressible Euler-Fourier system and proved that for arbitrary smooth positive 
initial density and temperature there is a bounded initial velocity so that the considered initial spatially-periodic problem admits infinitely many weak 
solutions that emanate from this fix set of initial data and satisfy the first law of thermodynamics (conservation of energy). 
Such results are thus closely related not only to the original system \eqref{syso} but also to its regularization by $-\eps \Delta\S$ 
studied in this paper. 

\smallskip

Regarding available analytical studies concerning weak solutions to stress-diffusive models, it is worth noting that all of them concern
systems where the balance of linear momentum contains additional diffusion of the type $-\Delta \vv$. More specifically, the existence of a global weak solution to the Oldroyd-B model with stress diffusion was proved in two space dimensions 
by Barrett and Boyaval \cite{Barrett2011} (see also Barrett and S\"uli \cite{Suli2011} or Luk\'a\v{c}ov\'a-Medvi\v{d}ov\'a et al. \cite{Mizerova2015}).  
Regularity of solutions of the Oldroyd-B equations in two spatial dimensions with spatial diffusion of the polymeric stress tensor
have been proved in Constantin and Kliegl \cite{Constantin2012}, where the authors take advantage of the nonnegativity of the polymeric stress matrix, 
which is preserved under diffusive evolution. Recently Chupin and Martin \cite{Chupin2015}, addressed the stationary Oldroyd-B model 
with a diffusive stress, from both an analytical and a numerical perspective. The authors investigated, by means of numerical simulations, 
the behaviour of the model with respect to vanishing diffusion, and concluded that solutions of the diffusive model converge to solutions 
of the non-diffusive model at order $1$ in the $W^{1,2}$ norm. Moreover, numerical stability of the effect of including the stress-diffusive 
term into the classical Oldroyd-B constitutive equation has been studied in \cite{Beris1995}. 

Let us re-emphasize that in the references mentioned above the authors take advantage of the presence of regularizing terms  both in the 
momentum equation and in the evolution equation for $\S$. In contrast, our results  require the regularization only in the equation 
for the stress.

For the sake of completeness, let us provide an overview of results concerning existence of solutions to visco-elastic fluids models, in 
particular to the Oldroyd-B model. There are several classes of visco-elastic fluid models that differ from our model by the presence of 
the dissipative term in the balance of linear momentum (typically in the form $-\nu \Delta \vv$), by the different form of the objective 
derivative and by the presence of other terms.

To the authors' knowledge, the first result on incompressible Oldroyd-B fluids was obtained by Guillopé and Saut \cite{Guillope1990}. The 
result concerns local-in-time existence of regular solutions as well as existence of global-in-time solutions for small initial data in a 
Hilbert framework. The main obstacle to obtaining existence results \emph{in the large} was the fact that, in general, there is no appropriate 
energy estimate for such a non-Newtonian fluid. (As a review paper in this direction, we refer to Fernández-Cara, Guillén and Ortega \cite{Cara2002}.) 
Despite this difficulty, Lions and Masmoudi established in \cite{Lions2000} existence of global weak solutions for a model with the 
Zaremba-Jaumann derivative. This seems to be one of the most significant results in this area. The authors use essentially that 
additional energy estimates are available for the Zaremba-Jaumann objective time derivative. The result by Lions and Masmoudi 
was generalised by Bejaoui and Majdoub in \cite{Majdoub2013}, where the authors replaced the Laplacian term by $\div(f(\D))$ 
with a tensorial function $f$, which is $C^1$, monotone, coercive and enjoys a $p$-growth with $p\ge 2$ in two dimensions ($p\ge 5/2$ for $d=3$).    

For well-posedness results in scaling-invariant Besov spaces, we refer to the work of Chemin and Masmoudi \cite{Chemin2001}, 
where they also provide certain blow-up criteria, both for two and three dimensions. Further interesting results concerning the
local well-posedness of the initial-boundary-value problem for Oldroyd-type fluids have been obtained in several other studies, 
see Liu et al. \cite{Liu2005} or Liu et al. \cite{Liu2008}.

Results for the compressible Oldroyd-B model are much scarcer. Lei \cite{Lei2006} proved the local and global existence of classical solutions 
for a compressible Oldroyd-B system in a torus with small initial data. He also studied the incompressible limit problem and showed that 
compressible flows with well-prepared initial data converge to incompressible ones when the Mach number converges to zero. Strong solutions of three-dimensional flows of compressible Oldroyd-B fluids were studied in  Fung~and~Zi~\cite{Fang2013}. Recently, Barrett et al. \cite{Lu2016} established long-time and large-data existence of weak solutions to compressible Oldroyd-B fluids with stress diffusion.

All of the results mentioned above take advantage of the presence of the Newtonian stress tensor in the balance of linear momentum and, 
consequently, of the boundedness of the velocity gradient in a Lebesgue space (typically $L^2$). 
Such a piece of information however does not follow from the first a-priori estimates for the systems considered here.

\section{Main result}\label{sec2}
In what follows we set
\begin{equation*}
T\in (0,\infty) \, \textrm{ and } \, \Omega=(0, L_1) \times (0, L_2) \qquad (L_i\in (0, \infty),\, i=1,2).
\end{equation*}

We will assume that all functions  considered are spatially $\Omega$-periodic and that their mean values over $\Omega$ vanish. 
For spatially $\Omega$-periodic functions, we employ standard notation for the function spaces considered, see for example \cite{Temam2001, MaRa2005} 
for appropriate definitions of $\Omega$-periodic function spaces.

\subsection[
Regularization by $-\eps \Delta \pdt \S$. Global-in-time existence of a weak solution]{ 
Regularization by $-\eps \Delta \pdt \S$: global-in-time existence}

Let us first consider the system of partial differential equations model \eqref{syso} regularized by adding the term $-\eps \Delta \pdt \S$ 
to the left-hand side of \eqref{syso}$_3$. Since we are unable to pass to the limit with $\eps \to 0$, for the sake of brevity we 
set $\eps = 1$ in what follows.

The formulation of the problem is thus the following: for given $\Omega$-periodic initial data $\vv_0, \S_0$, 
find $\vv(x,t):\Omega \times (0,T) \mapsto \Re^2$, \mbox{$p(x,t):\Omega \times (0,T) \mapsto \Re$}, and 
$\S(x,t):\Omega \times (0,T) \mapsto \Re^{2\times 2}_{\text{sym}}$ that are $\Omega$-periodic and satisfy
\begin{subequations}\label{sysr1}
\begin{align}
\div\vv &= 0, \label{sysa}\\
\pd{\vv}{t} + (\vv\cdot\nabla)\vv   + \nabla p &= \div\S, \label{sysb}\\
\pd{\S}{t} + (\vv \cdot \nabla)\S + \S\W -\W\!\S - \Delta \pdt \S &= \D\label{sysc},\\
\vv(x,0) &= \vv_0,\\
\S(x,0) &= \S_0.
\end{align}
\end{subequations}

Let us now specify our notion of a weak solution of \eqref{sysr1}.  By $\langle \cdot,\cdot \rangle$ we will denote the duality 
pairing between ${(W^{1,2}_{div})^\ast}$ and $W^{1,2}_{div}$ for $\vv$ or between $(W^{1,2})^\ast$, $W^{1,2}$ for $\S$.

\begin{definition}\label{def:cr}
We say that a couple $(\vv,\S)$ satisfying 
$$\vv \in L^2 (0, T ; W^{1,2}_{div})\cap \mathcal{C}([0,T];L^2_{div}), \quad \S \in  L^2(0,T;W^{1,2})\cap \mathcal{C}([0,T];L^2),$$
$$ \pd{\vv}{t} \in L^2(0,T;(W_{\div}^{1,2})^\star), \quad \ \pd{\S}{t} \in L^2(0,T;L^{2})$$
is a weak solution to the problem \eqref{sysr1} if for all $\Omega$-periodic $ \vvt \in C^\infty(\Omega\times (-\infty, T])^2$ such 
that  $\div \vvt = 0$ and $\vvt(\cdot, T) = 0$ and for all $\Omega$-periodic $\St \in \{C^\infty(\Omega\times (-\infty, T])^{2\times 2}\}$ 
with $\St(\cdot, T)= 0$ the following integral identities hold:
\begin{subequations}\label{weak}
\begin{align}
\begin{split}
 \intt \left\langle \pd{\vv}{t},\vvt \right\rangle \dt &+ \intt\intd (\vv\cdot \nabla)\vv\cdot \vvt \dx\dt= - \intt\intd \S:\D(\vvt)\dx\dt,   \label{weak1}
\end{split}\\
\begin{split}
 \intt \left\langle \pd{\S}{t},\St \right\rangle \dt & + \intt\intd (\vv\cdot\nabla)\S:\St\dx\dt + \intt\intd \S\W:\St - \W \S :\St \dx\dt \\ &= - \intt\left\langle \pd{\nabla \S}{t},\nabla\St \right\rangle\dt + \intt\intd \D:\St \dx\dt \label{weak2},
\end{split}
\end{align}
\end{subequations}
and, in addition, for all time independent,  $\Omega$-periodic and smooth $\vvt$ and $\St$  
$$
\lim_{t \to 0^+} \int_\Omega \vv(t) \cdot\vvt = \int_\Omega \vv_0\cdot \vvt, \qquad \lim_{t \to 0^+} \int_\Omega \S(t) :\St = \int_\Omega \S_0 :\St.
$$
\end{definition}
\begin{remark}
Observe that for \eqref{weak2} we need $ \pd{\nabla \S}{t} \in L^2(0,T;(W^{1,2})^\star)$. But the assumption  $\pd{ \S}{t} \in L^2(0,T;L^{2})$  automatically
implies it, in our simple setting of periodic functions with zero means.
\end{remark}
\begin{theorem}[Global-in-time existence and higher regularity]\label{th:1}
Let $\vv_0 \in W_{\div}^{1,2}$ and $\S_0 \in W^{2,2}$ be $\Omega$-periodic. Then, there exists a global in time weak solution $(\vv, \S)$ 
to the problem \eqref{sysr1}. Moreover, the initial condition is attained in the sense $\lim_{t\to 0}\|\S(t) - \S_0 \|_{W^{3/2,2}} = 0$, $\lim_{t\to 0}\|\vv(t) - \vv_0 \|_{L^2} =0 $ and the following higher regularity estimates hold:
\begin{equation}\begin{split}
\| \vv \|_{L^\infty(0,T;W^{1,2})}  &+ \| \S \|_{L^\infty(0,T;W^{2,2})} + \left\Vert \S \right\Vert_{W^{1,2} (0,T;W^{1,2})} \\
& + \left\Vert \pd{\S}{t} \right\Vert_{L^2 (0,T;W^{1,2})} + \left\Vert\pd{\vv}{t} \right\Vert_{L^2(0,T;(W^{1,2}_{div})^*)}  \le C\exp ( C \exp(T)),
\end{split}\label{M151}
\end{equation}
where $C = C  \left( \| \vv_0 \|_{W^{1,2}}  +\| \S_0   \|_{W^{2,2}} \right) $. Furthermore, the weak solution satisfying \eqref{M151} is uniquely determined by the initial data.\end{theorem}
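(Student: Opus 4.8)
\emph{The plan is to prove Theorem~\ref{th:1} by a Galerkin approximation, two a~priori estimates, and a compactness argument.} First I would expand $(\vv^n,\S^n)$ in the first $n$ Fourier modes (using the divergence-free modes for $\vv^n$), reducing \eqref{sysr1} to a system of ordinary differential equations solvable on a maximal interval by Carath\'eodory. The structural fact that drives the whole proof is that, after the spatial projection $P_n$, equation \eqref{sysc} becomes $(I-\Delta)\partial_t\S^n=P_n\big(\D^n-(\vv^n\cdot\nabla)\S^n-\S^n\W^n+\W^n\S^n\big)$, so that $\partial_t\S^n$ is \emph{two spatial derivatives smoother} than the right-hand side. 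Testing \eqref{sysb} by $\vv^n$ and \eqref{sysc} by $\S^n$, using $\div\vv^n=0$, the pointwise orthogonality \eqref{M100}, and the cancellation $\int_\Omega\div\S^n\cdot\vv^n+\int_\Omega\D^n:\S^n=0$ (which holds because $\S^n$ is symmetric), gives the discrete version of \eqref{enerI}; this bounds $\vv^n$ in $L^\infty(0,T;L^2)$ and $\S^n$ in $L^\infty(0,T;W^{1,2})$ uniformly in $n$ and shows the Galerkin solutions exist on all of $[0,T]$.

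\emph{The core of the argument is the higher-order estimate.} I would test \eqref{sysb} by $-\Delta\vv^n$ and \eqref{sysc} by $-\Delta\S^n$ and add. Three cancellations then occur: the planar identity $\int_\Omega(\vv^n\cdot\nabla)\vv^n\cdot\Delta\vv^n=0$ (this is where two dimensions are used), the pressure term vanishes, and $\int_\Omega\div\S^n\cdot\Delta\vv^n+\int_\Omega\D^n:\Delta\S^n=0$ (again by symmetry of $\S^n$). In the convective term of the stress equation the top-order piece integrates away, leaving only a term of the form $\int_\Omega\nabla\vv^n:(\nabla\S^n\otimes\nabla\S^n)$, which by Ladyzhenskaya's inequality and the $L^\infty_tW^{1,2}$ bound on $\S^n$ is dominated by $\tfrac18\|\Delta\S^n\|_2^2+C\|\nabla\vv^n\|_2^2$. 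The one genuinely dangerous term is the commutator contribution $\int_\Omega(\S^n\W^n-\W^n\S^n):\Delta\S^n$: integrating it by parts would create an uncontrolled $\nabla^2\vv^n$ (there is no dissipation in the momentum equation), so it must be bounded directly by $2\|\S^n\|_{L^\infty}\|\nabla\vv^n\|_2\|\Delta\S^n\|_2$, and estimating $\|\S^n\|_{L^\infty}$ is the crux. Here I would use the two-dimensional logarithmic Sobolev (Brezis--Gallouet) inequality $\|\S^n\|_{L^\infty}\le C\|\S^n\|_{W^{1,2}}\big(1+\log^{1/2}(e+\|\S^n\|_{W^{2,2}})\big)$ together with the energy bound, obtaining a bound $\tfrac18\|\Delta\S^n\|_2^2+C\|\nabla\vv^n\|_2^2\big(1+\log(e+\|\Delta\S^n\|_2^2)\big)$. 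Writing $y_n:=\|\nabla\vv^n\|_2^2+\|\nabla\S^n\|_2^2+\|\Delta\S^n\|_2^2$, this yields $y_n'\le C\,y_n\big(1+\log(e+y_n)\big)$ with $y_n(0)\le C\big(\|\vv_0\|_{W^{1,2}}^2+\|\S_0\|_{W^{2,2}}^2\big)$; applying Gr\"onwall to $\log(e+y_n)$ gives $y_n(t)\le\exp\!\big(C\exp(Ct)\big)$ uniformly in $n$, which is exactly the double-exponential bound in \eqref{M151}. Hence $\vv^n$ is bounded in $L^\infty(0,T;W^{1,2})$ and $\S^n$ in $L^\infty(0,T;W^{2,2})$, and from $(I-\Delta)\partial_t\S^n=P_n(\,\cdot\,)$ (its right-hand side being bounded in $L^\infty(0,T;L^2)$, since in the plane $W^{2,2}\hookrightarrow L^\infty$ and $W^{1,2}\hookrightarrow L^4$) the time derivative $\partial_t\S^n$ is bounded in $L^\infty(0,T;W^{2,2})\subset L^2(0,T;W^{1,2})$, while $\partial_t\vv^n$ is bounded in $L^2(0,T;(W^{1,2}_{\div})^\ast)$ directly from \eqref{sysb}.

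\emph{Passage to the limit.} By Aubin--Lions I would extract a subsequence with $\vv^n\to\vv$ in $C([0,T];L^2)$ and $\S^n\to\S$ in $C([0,T];W^{s,2})$ for every $s<2$, together with the weak-$\ast$ convergences of $\partial_t\vv^n$ and $\partial_t\S^n$. Every nonlinearity in \eqref{weak} is then a product of a strongly convergent factor and a weakly convergent one, so the limit passage is straightforward after reducing, by density, to the Galerkin test functions; estimate \eqref{M151} survives by weak lower semicontinuity. The initial conditions are identified from $\vv^n(0)=P_n\vv_0$, $\S^n(0)=P_n\S_0$ and the uniform-in-time convergence, and $\S(t)\to\S_0$ in $W^{3/2,2}$ follows by interpolation from $\S\in L^\infty_tW^{2,2}$ and $\partial_t\S\in L^2_tW^{1,2}$.

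\emph{Uniqueness.} Given two solutions obeying \eqref{M151}, I would test the difference of the momentum equations by $\vv_1-\vv_2$ and the difference of the stress equations by $\S_1-\S_2$, exploit the same cross-cancellation as in the energy estimate, and control the remaining (bi)linear terms — including the transport term $(\vv_1-\vv_2)\cdot\nabla\vv_2$, for which the $L^\infty_tW^{1,2}$ bounds from \eqref{M151} are used — to reach a Gr\"onwall inequality $\tfrac{d}{dt}Q\le C(t)Q$ with $Q:=\|\vv_1-\vv_2\|_2^2+\|\S_1-\S_2\|_2^2+\|\nabla(\S_1-\S_2)\|_2^2$ and $C\in L^1(0,T)$; since $Q(0)=0$, this forces $Q\equiv0$. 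The step I expect to be the main obstacle is the higher-order a~priori estimate — more precisely, the fact that the commutator term has to be paid for with a logarithm of $\|\S\|_{W^{2,2}}$, which is exactly what produces the double-exponential growth rate appearing in \eqref{M151}.
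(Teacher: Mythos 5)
Your plan is correct and follows essentially the same route as the paper's proof: Galerkin approximation, a first (energy) estimate yielding \eqref{enerI}, a second estimate obtained by testing with $-\Delta\vv$ and $-\Delta\S$ where the commutator term is handled via the Brezis--Gallouet logarithmic Sobolev inequality and then Gr\"onwall on $\ln(e+Y)$ produces the double-exponential bound, followed by Aubin--Lions compactness, interpolation for continuity at $t=0$, and a standard energy-method uniqueness argument. The only (minor) deviation is that you bound $\partial_t\S^N$ by inverting $(I-\Delta)$ on the Galerkin equation — which requires (and you implicitly choose) the Fourier eigenbasis so that $P_N$ commutes with $\Delta$ — whereas the paper instead tests the stress equation by $\partial_t\S$; both give the required $L^2(0,T;W^{1,2})$ bound.
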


\subsection[Regularization by $-\eps \Delta \S$. Local-in-time existence of a weak solution]{Regularization by $-\eps \Delta \S$: local-in-time or small data existence}

For the second regularization of the system \eqref{syso}, obtained by adding $-\eps\Delta \S$ to the left-hand side of \eqref{syso}$_3$, we will be able to prove, for fixed $\eps>0$, a weaker existence result: we either 
restrict ourselves to a short time interval or we establish a global in time existence result for small initial data. 

We investigate the following problem\footnote{Again, we set $\eps=1$ as taking the limit $\eps\to 0$ is beyond the scope of this paper.}: 
for given $\Omega$-periodic initial data $\vv_0, \S_0$, find $\vv(x,t):\Omega \times (0,T) \mapsto \Re^2$, \mbox{$p(x,t):\Omega \times (0,T) \mapsto \Re$} and $\S(x,t):\Omega \times (0,T) \mapsto \Re^{2\times 2}_{\text{sym}}$
that are $\Omega$-periodic and satisfy
\begin{subequations}\label{sys2}
\begin{align}
\div\vv &= 0, \label{sys2a}\\
\pd{\vv}{t} + (\vv\cdot\nabla)\vv   + \nabla p &= \div\S, \label{sys2b}\\
\pd{\S}{t} + (\vv \cdot \nabla)\S + \S\W -\W\!\S {- \Delta \S} &= \D\label{sys2c},\\
\vv(x,0) &= \vv_0,\\
\S(x,0) &= \S_0.
\end{align}
\end{subequations}

Let us now clarify what we mean by a {weak solution} to \eqref{sys2}.

\begin{definition}\label{def:dr}
A couple $(\vv,\S)$ satisfying   
$$\vv \in L^2 (0, T ; W^{1,2}_{div})\cap \mathcal{C}(0,T;L^2_{div}), \quad \S \in  L^2(0,T;W^{1,2})\cap \mathcal{C}(0,T;L^2),$$
$$ \pd{\vv}{t} \in L^2(0,T;(W_{\div}^{1,2})^\star), \quad \ \pd{\S}{t} \in L^2(0,T;(W^{1,2})^\star)$$
is called a 
weak solution to the problem \eqref{sys2} if, for all $\Omega$-periodic $ \vvt \in C^\infty(\Omega\times (-\infty, T])^2$ such 
that  $\div \vvt = 0$ and $\vvt(\cdot, T) = 0$ and for all $\Omega$-periodic $\St \in \{C^\infty(\Omega\times (-\infty, T])^{2\times 2}\}$ with 
$\St(\cdot, T)= 0$,  the following integral identities hold:
\begin{subequations}\label{weakth2}
\begin{align}
\begin{split}
 \intt \left\langle \pd{\vv}{t},\vvt \right\rangle \dt &+ \intt\intd (\vv\cdot \nabla)\vv\cdot \vvt \dx\dt= - \intt\intd \S:\D(\vvt)\dx\dt,   \label{weak1th2}
\end{split}\\
\begin{split}
 \intt \left\langle \pd{\S}{t},\St \right\rangle \dt & + \intt\intd (\vv\cdot\nabla)\S:\St\dx\dt + \intt\intd \S\W:\St - \W \S :\St \dx\dt \\ &+ \intt\intd \nabla\S:\nabla\St \dx\dt = \intt\intd \D:\St \dx\dt,\label{weak2th2}
\end{split}
\end{align}
\end{subequations} 
and for all time-independent,  $\Omega$-periodic and smooth $\vvt$ and $\St$ 
$$
\lim_{t \to 0^+} \int_\Omega \vv(t)\cdot \vvt = \int_\Omega \vv_0\cdot \vvt, \qquad \lim_{t \to 0^+} \int_\Omega \S(t) :\St = \int_\Omega \S_0 :\St.
$$
\end{definition}
The following existence result holds. 
\begin{theorem}[Local-in-time existence/small-data global-in-time existence]\label{th:2}
There exists a $c_*>0$ (a constant related to certain  interpolations and embeddings)  such that for any $\vv_0 \in W_{\div}^{1,2}$, $\S_0 \in W^{2,2}$ the problem \eqref{sys2} admits a weak solution $(\vv, \S)$ provided that for $X(0) := \| \nabla \vv_0 \|^2_{L^2} + \| \nabla \S_0 \|^2_{L^2}$ and $c_0:=c_*\max\{1, \|\S_0\|_2^2\}$ 
one of the following conditions holds:
\begin{itemize}
\item[(i)] $T <\frac{1}{c_0 X(0)}\;$; 
\item[(ii)] $ X (0) < c_0^{-\frac12}$.
\end{itemize}
The initial condition is attained in the sense $\lim_{t\to 0}\|\S(t) - \S_0 \|_{W^{1,2}}=0$ and $\lim_{t\to 0}\|\vv(t) - \vv_0 \|_{L^2}=0$. 
Moreover, this solution satisfies the following higher regularity estimates:
\be{ap21t}{
\| \vv    \|^2_{L^\infty (0, T; L^{2})} + \| S   \|^2_{L^\infty (0, T; L^{2})}   +\| \S   \|^2_{L^2 (0, T; W^{1,2})} \le   \|\vv_0 \|^2_{L^2} + \|\S_0 \|^2_{L^2},
}
\be{ap22t}{ \| \vv    \|^2_{L^\infty (0, T; W^{1,2})}  +\| \S   \|^2_{L^2 (0, T; W^{2,2})} \le C_1,
}
with $C_1 = CT\left(\frac{X (0)}{1 - c_0 T X (0)} \right)^2$ under hypothesis (i) and $C_1 = \frac{2 X(0)}{1 -  c_0 X (0)}$ under hypothesis (ii), as well as
\be{ap23t}{\left\Vert\pd{\vv}{t} \right\Vert_{L^2(0,T;(W^{1,2}_{div})^*)}  \le  C ((C_1 T)^\frac12 +1)  [\|\vv_0 \|_{L^2} + \|\S_0 \|_{L^2}],
}
\be{ap24t}{   \left\Vert\pd{\S}{t} \right\Vert^2_{L^2 (0,T;L^2)}  \le CC_1 (1+ C_1 +T).
}
Furthermore, the weak solution to \eqref{sys2} satisfying \eqref{ap21t}--\eqref{ap24t} is uniquely determined by the initial data.
\end{theorem}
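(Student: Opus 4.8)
\medskip
\noindent\textbf{Sketch of a proof of Theorem~\ref{th:2}.}
The plan is the by-now-standard four-step scheme. \emph{Step 1:} construct Galerkin approximations in the finite-dimensional spaces spanned by the $\Omega$-periodic, mean-free Fourier modes -- divergence-free ones for $\vv$, symmetric-tensor valued ones for $\S$ -- so that $-\Delta$ commutes with the projections. \emph{Step 2:} derive data-dependent a~priori estimates reproducing \eqref{ap21t}--\eqref{ap24t}. \emph{Step 3:} pass to the limit by compactness. \emph{Step 4:} prove uniqueness directly from the equations. Since $d=2$, I will repeatedly use the Poincar\'e inequality for mean-free functions, the periodic elliptic bound $\|\grad^2\S\|_{L^2}\le C\|\Delta\S\|_{L^2}$, the Ladyzhenskaya inequality $\|f\|_{L^4}^2\le C\|f\|_{L^2}\|\grad f\|_{L^2}$ and the interpolation $\|\S\|_{L^\infty}\le C\|\S\|_{L^2}^{1/2}\|\S\|_{W^{2,2}}^{1/2}$.

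The first a~priori estimate is the energy identity \eqref{enerII}: testing \eqref{sys2b} by $\vv$ and \eqref{sys2c} by $\S$ and adding, the coupling integrals $\intd\div\S\cdot\vv\dx$ and $\intd\D:\S\dx$ cancel, $\intd\grad p\cdot\vv\dx=0$, and $(\S\W-\W\S):\S=0$ by \eqref{M100}; this yields \eqref{ap21t} and, in particular, bounds $\|\S(t)\|_{L^2}^2$ by the data, which is what enters $c_0$. The crux is the higher-order estimate, and here the absence of a $-\Delta\vv$-term in \eqref{sys2b} is the difficulty: testing it by $-\Delta\vv$ produces the cubic term $\intd(\partial_j v_k)(\partial_k v_i)(\partial_j v_i)\dx$, which cannot be absorbed in two dimensions without a velocity dissipation. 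I would therefore pass to the vorticity $\omega:=\partial_1 v_2-\partial_2 v_1$, which satisfies $\partial_t\omega+(\vv\cdot\grad)\omega=\mathrm{curl}\,\div\S$; testing this by $\omega$ the transport term disappears, $\|\omega\|_{L^2}=\|\grad\vv\|_{L^2}$, and, since $\grad^\perp\omega=-\Delta\vv$ for divergence-free $\vv$, one gets $\tfrac12\ddt\|\grad\vv\|_{L^2}^2=\intd\div\S\cdot(-\Delta\vv)\dx$. Testing \eqref{sys2c} by $-\Delta\S$ gives
\[
\tfrac12\ddt\|\grad\S\|_{L^2}^2+\|\Delta\S\|_{L^2}^2=\intd(\vv\cdot\grad)\S:\Delta\S\dx+\intd(\S\W-\W\S):\Delta\S\dx+\intd\D:(-\Delta\S)\dx .
\]
The decisive observation is that, upon integrating by parts and using $\S=\S^T$, $\intd\div\S\cdot(-\Delta\vv)\dx+\intd\D:(-\Delta\S)\dx=0$; thus, with $X:=\|\grad\vv\|_{L^2}^2+\|\grad\S\|_{L^2}^2$,
\[
\tfrac12\ddt X+\|\Delta\S\|_{L^2}^2=\intd(\vv\cdot\grad)\S:\Delta\S\dx+\intd(\S\W-\W\S):\Delta\S\dx .
\]
The first term, after integrating by parts and using $\div\vv=0$, is bounded by $C\|\grad\vv\|_{L^2}\|\grad\S\|_{L^4}^2\le C\|\grad\vv\|_{L^2}\|\grad\S\|_{L^2}\|\Delta\S\|_{L^2}$; the second by $C\|\S\|_{L^\infty}\|\grad\vv\|_{L^2}\|\Delta\S\|_{L^2}$, which one estimates through $\|\S\|_{L^\infty}\le C\|\S\|_{L^2}^{1/2}(\|\S\|_{L^2}+\|\Delta\S\|_{L^2})^{1/2}$ and the energy bound on $\|\S\|_{L^2}$. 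Absorbing the resulting $\|\Delta\S\|_{L^2}^2$-contributions on the left, one arrives at a differential inequality for $X$ in which the stress dissipation $\|\Delta\S\|_{L^2}^2$ is retained and the right-hand side is a superlinear function of $X$ with coefficient $c_0=c_*\max\{1,\|\S_0\|_{L^2}^2\}$. Under hypothesis (i), comparison with $\ddt X\le c_0X^2$ gives $X(t)\le X(0)/(1-c_0X(0)t)$ on $[0,T)$ and, integrating, control of $\int_0^T\|\Delta\S\|_{L^2}^2\dt$; under hypothesis (ii), a continuity/bootstrap argument shows that $X$ stays below the threshold on which the superlinear terms are dominated by $\|\Delta\S\|_{L^2}^2$, so that $X$ is non-increasing and $\int_0^\infty\|\Delta\S\|_{L^2}^2\dt$ is controlled by $X(0)$. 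Either way this gives \eqref{ap22t} and $\S\in L^2(0,T;W^{2,2})$.

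The remaining estimates \eqref{ap23t}--\eqref{ap24t} are obtained by reading $\partial_t\vv$ off \eqref{sys2b} and $\partial_t\S$ off \eqref{sys2c}, using $\S\in L^2(0,T;W^{2,2})$ and the two-dimensional embeddings. All bounds are uniform along the Galerkin scheme, so by the Aubin--Lions lemma a subsequence converges strongly, $\vv$ in $L^2(0,T;L^2)$ and $\S$ in $L^2(0,T;W^{1,2})$, which suffices to pass to the limit in $(\vv\cdot\grad)\vv$, $(\vv\cdot\grad)\S$ and $\S\W-\W\S$ and to obtain \eqref{weakth2}. Finally $\S\in L^2(0,T;W^{2,2})$ together with $\partial_t\S\in L^2(0,T;L^2)$ yields $\S\in\mathcal C([0,T];W^{1,2})$ (hence $\|\S(t)-\S_0\|_{W^{1,2}}\to0$), while $\vv\in L^\infty(0,T;L^2)$, $\partial_t\vv\in L^2(0,T;(W^{1,2}_{\div})^\ast)$ and the energy equality give strong continuity of $\vv$ in $L^2$ and $\|\vv(t)-\vv_0\|_{L^2}\to0$.

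For uniqueness, take two solutions satisfying \eqref{ap21t}--\eqref{ap24t} with the same data and set $\tilde\vv=\vv_1-\vv_2$, $\tilde\S=\S_1-\S_2$. Testing the difference of \eqref{sys2b} by $\tilde\vv$ and of \eqref{sys2c} by $\tilde\S$ and adding, the transport terms with $\vv_1$ drop out, the coupling terms cancel as above, and, writing $\W_i$ for the skew part of $\grad\vv_i$, $\intd(\tilde\S\W_1-\W_1\tilde\S):\tilde\S\dx=0$ by \eqref{M100}, leaving on the right only $-\intd(\tilde\vv\cdot\grad)\vv_2\cdot\tilde\vv\dx$, $-\intd(\tilde\vv\cdot\grad)\S_2:\tilde\S\dx$ and $-\intd\bigl(\S_2(\W_1-\W_2)-(\W_1-\W_2)\S_2\bigr):\tilde\S\dx$. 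The last two are absorbed using $\|\grad\tilde\S\|_{L^2}^2$ on the left together with $\|\grad\S_2\|_{L^4},\|\S_2\|_{L^\infty}\in L^2(0,T)$ (consequences of $\S_2\in L^2(0,T;W^{2,2})$) and two-dimensional interpolation; the velocity-transport term, which has no dissipative counterpart, is the delicate one -- one must exploit the uniform bound $\|\grad\vv_i\|_{L^2}\le C_1^{1/2}$ from \eqref{ap22t} and, if necessary, enlarge the quantity on which Gronwall's lemma is applied so as to include $\|\grad\tilde\vv\|_{L^2}^2=\|\tilde\omega\|_{L^2}^2$ and to reuse the cancellation between $\intd\mathrm{curl}\,\div\tilde\S\cdot\tilde\omega\dx$ and $\intd\D(\tilde\vv):(-\Delta\tilde\S)\dx$. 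A Gronwall argument then forces $\tilde\vv\equiv\tilde\S\equiv0$. I expect the main obstacle of the whole argument to be precisely the higher-order estimate: because the momentum balance carries no dissipation, the growth of $\|\grad\vv\|_{L^2}$ can be controlled only through the vorticity reformulation and the cancellation $\intd\div\S\cdot(-\Delta\vv)\dx+\intd\D:(-\Delta\S)\dx=0$, which ties it to the stress dissipation $\|\Delta\S\|_{L^2}^2$; the same feature is what makes the velocity-transport term the subtle point in the uniqueness proof.
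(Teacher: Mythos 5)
Your proposal follows essentially the same route as the paper: same first energy identity, same second-order estimate obtained by testing with $-\Delta\vv$ and $-\Delta\S$ and exploiting the cancellation of the coupling integrals, the same Ladyzhenskaya and Agmon estimates leading to the differential inequality $\ddt X\le c_0X^2$ (resp.\ the small-data continuity argument), the same time-derivative bounds, Galerkin plus Aubin--Lions for compactness, and attainment of the initial data.

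One small but worth-noting discrepancy: you assert that the cubic term $\intd(\partial_jv_k)(\partial_kv_i)(\partial_jv_i)\dx$ ``cannot be absorbed in two dimensions without a velocity dissipation'' and then route around it via the vorticity formulation. In fact, for $d=2$ and $\div\vv=0$ this trilinear term \emph{vanishes identically} -- writing $A=\grad\vv$, the Cayley--Hamilton theorem for traceless $2\times2$ matrices gives $A^2=-(\det A)\,\I$, whence $\tr(A^2A^T)=-(\det A)\tr A^T=0$; equivalently this is the absence of vortex stretching in 2D. The paper simply invokes this fact (citing \cite{Temam2001,MalekWMV}) and so obtains $\12\ddt\|\grad\vv\|_{L^2}^2=\intd\Delta\S:\D\dx$ directly, without the detour through $\omega$. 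Your vorticity reformulation is mathematically equivalent and also correct (modulo a harmless sign in $\grad^\perp\omega=\pm\Delta\vv$), but the perceived obstacle is not actually there. The rest of the argument, including the bootstrap for case (ii) and the uniqueness sketch, matches the paper's structure.
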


\subsection{Logarithmic Sobolev inequality}

The logarithmic Sobolev inequality plays a crucial role in our analysis for the derivation of a-priori estimates. These kinds of critical Sobolev 
inequalities have been extensively studied in the context of the Euler equations, see for example Kozono \cite{Kozono2000, Kozono2002}.
The special case, that we use here, was given by Brezis and Gallouet \cite{Brezis1979} (see also Brezis and Wainger \cite{Brezis1980}), 
where the authors studied the nonlinear Schr\"odinger equation.  
\begin{prop}\label{proplogineq}

Let $f \in W^{2,2}(\Omega)$ and $ \Omega \subset \Re^2$ with boundary satisfying the strong local Lipschitz condition \cite[4.9]{adams}. 
Then the following Brezis--Gallouet inequality holds, see \cite{Brezis1979}:
\be{logineq}{ \| f \|_{L^\infty} \le C  \left( 1 +\| \grad f\|_{L^2}\left(\ln^+ (\|f \|_{W^{2,2}}) \right)^{\frac{1}{2}}\right),}
where $ \ln^+(x) = \begin{cases} 1 &\text{ for } x < e, \\ \ln x &\text{ for } x \ge e.
 \end{cases}$
\end{prop}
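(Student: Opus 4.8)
The plan is to follow the classical Fourier-analytic argument of Brezis and Gallouet \cite{Brezis1979}: one splits $f$ into its low- and high-frequency parts and uses the fact that in two dimensions the embedding $W^{1,2}\hookrightarrow L^\infty$ fails only by a logarithm. I will in fact establish the bound $\|f\|_{L^\infty(\Omega)}\le C(1+\|f\|_{W^{1,2}(\Omega)}(\ln^+\|f\|_{W^{2,2}(\Omega)})^{1/2})$, valid on any bounded domain satisfying the strong local Lipschitz condition; since every function in this paper is $\Omega$-periodic with zero mean, Poincar\'e's inequality gives $\|f\|_{W^{1,2}}\le C\|\nabla f\|_{L^2}$ and hence \eqref{logineq} as stated (with $\|\nabla f\|_{L^2}$ on the right the bound is in fact false for large constant functions, which is why the zero-mean convention is essential here). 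For a general Lipschitz domain I would first replace $f$ by $\mathcal E f$, where $\mathcal E\colon W^{k,2}(\Omega)\to W^{k,2}(\Re^2)$ is Stein's total extension operator (bounded simultaneously for $k=0,1,2$), so that it suffices to argue on $\Re^2$; alternatively one works directly on the torus $\Omega$, which is the only case actually needed below.

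Writing $f=\sum_k\hat f(k)e^{\mathrm i k\cdot x}$ in a Fourier series, note that $f\in W^{2,2}$ forces $\hat f\in\ell^1$, because $\sum_k(1+|k|^2)^{-2}<\infty$ in dimension two, and therefore $\|f\|_{L^\infty}\le\sum_k|\hat f(k)|$. For an integer $N\ge2$ to be fixed later, split the sum at $|k|=N$ and apply Cauchy--Schwarz to each piece:
\begin{align*}
\sum_{|k|\le N}|\hat f(k)|&\le\Big(\sum_{0<|k|\le N}\tfrac{1}{|k|^2}\Big)^{1/2}\Big(\sum_k(1+|k|^2)|\hat f(k)|^2\Big)^{1/2}\le C(\ln N)^{1/2}\|f\|_{W^{1,2}},\\
\sum_{|k|>N}|\hat f(k)|&\le\Big(\sum_{|k|>N}\tfrac{1}{|k|^4}\Big)^{1/2}\Big(\sum_k|k|^4|\hat f(k)|^2\Big)^{1/2}\le\tfrac{C}{N}\|f\|_{W^{2,2}}.
\end{align*}
The essential point is the two-dimensional dyadic count: the number of lattice points in the shell $|k|\sim2^j$ is of order $4^j$, so $\sum_{0<|k|\le N}|k|^{-2}$ grows like $\ln N$ while $\sum_{|k|>N}|k|^{-4}$ decays like $N^{-2}$; on $\Re^2$ the sums become integrals in polar coordinates with the same outcome. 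Combining the two estimates yields $\|f\|_{L^\infty}\le C(\ln N)^{1/2}\|f\|_{W^{1,2}}+CN^{-1}\|f\|_{W^{2,2}}$ for every $N\ge2$.

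It remains to optimise in $N$. If $\|f\|_{W^{2,2}}<e$ one simply takes $N=2$; if $\|f\|_{W^{2,2}}\ge e$ one takes $N=\lceil\|f\|_{W^{2,2}}\rceil$, so that $CN^{-1}\|f\|_{W^{2,2}}\le C$ and $(\ln N)^{1/2}\le C(\ln^+\|f\|_{W^{2,2}})^{1/2}$; in both cases the displayed bound follows, and the reduction described in the first paragraph gives \eqref{logineq}. I do not expect any real difficulty in this argument: the only slightly delicate points are the dyadic counting that produces the logarithm, the book-keeping in the choice of $N$ so that the constant $1$ and the truncation $\ln^+$ appear exactly as written, and --- if one insists on a general Lipschitz domain rather than the periodic setting --- the use of the extension operator together with the elementary estimate $\ln^+(Cx)\le(1+\ln C)\,\ln^+x$ for $C\ge1$, which allows the extension constant to be absorbed multiplicatively.
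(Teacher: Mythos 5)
Your proof is correct and follows essentially the same Brezis--Gallouet frequency-splitting argument as the paper: decompose the Fourier representation at a threshold, apply Cauchy--Schwarz with weights $(1+|k|^2)^{1/2}$ and $|k|^2$ to the low- and high-frequency pieces, and optimise the threshold by taking it comparable to $\|f\|_{W^{2,2}}$; the only cosmetic difference is that you offer a Fourier-series version on the torus alongside the paper's Stein extension to $\Re^2$. You are in fact slightly more careful than the paper in the final reduction: the proof in the text produces $\|f\|_{W^{1,2}}$ on the right-hand side and then asserts \eqref{logineq} (which has $\|\nabla f\|_{L^2}$) without comment, whereas you correctly observe that this step requires Poincar\'e's inequality under the zero-mean convention (and that without it the stated inequality fails for large constants), and you also note the elementary absorption $\ln^+(Cx)\le(1+\ln C)\ln^+ x$ needed to swallow the extension constant inside $\ln^+$.
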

The inequality \eqref{logineq}, for complex valued functions, was proved in \cite{Brezis1979}. For the sake of completeness, 
we recall below the proof of \eqref{logineq}. For a bounded domain $\Omega \subset \Re^2$ satisfying the strong local Lipschitz 
condition, there is a bounded extension operator $E$ from $W^{2,2} (\Omega)$ to $W^{2,2}(\Re^2)$, see 
\cite[Chapter VI]{Stein1970}. Let us write $Ef = \ve{f}$ and let $\hat{\ve{f}}$ denote the Fourier transform of $\ve{f}$. We have
\begin{eqnarray}
\| (1+|\xi|)\hat{\ve{f}}\|_{L^2(\Re^2)} &\le & C\|\ve{f}\|_{W^{1,2}(\Re^2)},\\
\| (1+|\xi|^2)\hat{\ve{f}}\|_{L^2(\Re^2)} &\le & C\|\ve{f}\|_{W^{2,2}(\Re^2)},\\
\|\ve{f}\|_{L^\infty (\Re^2)} &\le & C \|\hat{\ve{f}}\|_{L^1(\Re^2)}. \label{inf1}
\end{eqnarray}

\begin{proof}(Proposition \ref{proplogineq})
\be{ppli}{
\begin{split}
\|\hat{\ve{f}}\|_{L^1(\Re^2)} &= \int_{|\xi|<R} |\hat{\ve{f}} |\mathrm{d\xi} + \int_{|\xi|\ge R} |\hat{\ve{f}} |\mathrm{d\xi}\\
=& \int_{|\xi|<R} (1+|\xi|) |\hat{\ve{f}} |  \frac{1}{1+|\xi|}\mathrm{d\xi} + \int_{|\xi|\ge R} (1+|\xi|^2) |\hat{\ve{f}} |  \frac{1}{1+|\xi|^2} \mathrm{d\xi}\\
\le& \left(\int_{|\xi|<R} (1+|\xi|)^2 |\hat{\ve{f}} |^2 \mathrm{d\xi} \right)^{\12}\left(\int_{|\xi|<R} \frac{1}{(1+|\xi|)^2} \mathrm{d\xi} \right)^{\12} \\
&+\left(\int_{|\xi|\ge R} (1+|\xi|^2)^2 |\hat{\ve{f}} |^2 \mathrm{d\xi} \right)^{\12}\left(\int_{|\xi|\ge R} \frac{1}{(1+|\xi|^2)^2} \mathrm{d\xi} \right)^{\12} \\
\le & C\|\ve{f}\|_{W^{1,2}(\Re^2)}\left(\ln(e + R) \right)^\12 + C\|\ve{f}\|_{W^{2,2}(\Re^2)}\frac{1}{1+R}.
\end{split}
}
Inequality \eqref{ppli} holds for every $R\ge 0$. We put $R = \|\ve{f}\|_{W^{2,2} (\Re^2)}$ and by \eqref{inf1} we get
$$\|\ve{f}\|_{L^\infty} \le C\left(1+\|\ve{f}\|_{W^{1,2}}\left(\ln^+(\|\ve{f}\|_{W^{2,2}(\Re^2)}) \right)^\12 \right).$$
Since $\ve{f}$ is a continuous extension of $f$, one obtains  \eqref{logineq}.
\end{proof}

\section{Proof of Theorem \ref{th:1} }\label{sec3}
\subsection{A priori estimates}\label{sec:apriori}
We first collect the a priori estimates related to the problem \eqref{sysr1}.

\begin{prop}\label{lem:apriori}
 Let $\vv_0 \in W^{1,2}_{\div}$ 
and $\S_0 \in W^{2,2}$. For sufficiently smooth  $\vv$ and $\S$ satisfying \eqref{sysr1} the following bounds~hold:
\begin{align*}\| \vv    \|_{L^\infty (0, T; W^{1,2})}  +\| \S   \|_{L^\infty (0, T; W^{2,2})} &\le 2 \exp \left[ C  \left( \| \vv_0    \|_{W^{1,2}} +\| \S_0   \|_{W^{2,2}} \right)  \exp (T) \right],\\
\left\Vert\pd{\vv}{t} \right\Vert_{L^2(0,T;(W^{1,2}_{div})^*)}  &\le  C T \exp \left[ C  \left( \| \vv_0    \|_{W^{1,2}} +\| \S_0   \|_{W^{2,2}} \right)  \exp (T) \right],\\
\intt  \left\Vert\pd{\S}{t} \right\Vert^2_{L^2}  +   \left\Vert\pd{\nabla\S}{t} \right\Vert^2_{L^2} \dt  &\le 
C( \|\vv_0\|_{L^2}) + C(\|S_0\|_{W^{1,2}})T + C T \left( 1 + C\left(\ln^+ (C T ) \right)^{\frac{1}{2}}\right).
\end{align*}
\end{prop}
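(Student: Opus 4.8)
The plan is to prove the three displayed bounds in a cascade: the first --- the $L^\infty(0,T;W^{1,2})\times L^\infty(0,T;W^{2,2})$ bound on $(\vv,\S)$ --- is the substantive one and is driven by the Brezis--Gallouet inequality of Proposition~\ref{proplogineq}, while the bounds on $\partial_t\vv$ and $\partial_t\S$ are then essentially read off the equations and integrated in time. The common starting point is the energy identity \eqref{enerI}: testing \eqref{sysb} with $\vv$ and \eqref{sysc} with $\S$, using the orthogonality $(\S\W-\W\S):\S=0$ of \eqref{M100} and the mutual cancellation of the $\div\S$ and $\D$ terms, shows that $\|\vv(t)\|_{L^2}^2+\|\S(t)\|_{L^2}^2+\|\grad\S(t)\|_{L^2}^2$ is constant in time. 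The key consequence, used repeatedly below, is that $\|\grad\S(t)\|_{L^2}$ is then a priori bounded by the data for every $t$, and this is exactly the quantity multiplying the logarithm in \eqref{logineq}.

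For the first bound I work simultaneously with the scalar vorticity $\omega:=\partial_1 v_2-\partial_2 v_1$ and with the second spatial derivatives of $\S$. Taking the curl of \eqref{sysb} kills the pressure and, since there is no vortex stretching in the plane, gives $\partial_t\omega+(\vv\cdot\grad)\omega=\mathrm{curl}\,\div\S$; testing with $\omega$ and \emph{not} integrating by parts on the right (so as never to create $\grad^2\vv$, which no estimate controls) yields $\ddt\|\omega\|_{L^2}^2\le C\|\S\|_{W^{2,2}}\|\omega\|_{L^2}$. In parallel, I differentiate \eqref{sysc} in space by $\partial_m$ and test with $\partial_m\S$, summing over $m$: the time terms, including the one coming from $-\Delta\partial_t\S$, produce $\tfrac12\ddt\bigl(\|\grad\S\|_{L^2}^2+\|\grad^2\S\|_{L^2}^2\bigr)$; the transport term reduces, after discarding the exact part $\intd(\vv\cdot\grad)|\partial_m\S|^2\dx=0$ and one integration by parts of the commutator $(\partial_m\vv\cdot\grad)\S:\partial_m\S$ (in which $\div\vv=0$ eliminates the would-be $\grad^2\vv$ contribution), to a term $\le C\|\grad\vv\|_{L^2}\|\S\|_{L^\infty}\|\grad^2\S\|_{L^2}$; in $\partial_m(\S\W-\W\S):\partial_m\S$ the piece $(\partial_m\S)\W-\W(\partial_m\S)$ is annihilated by \eqref{M100} applied to the \emph{symmetric} tensor $\partial_m\S$, while the remaining piece $\S\,\partial_m\W-\partial_m\W\,\S$ is integrated by parts in the variable carrying the $\W$-derivative and, after one further use of the antisymmetry of $\W$, is again $\le C\|\grad\vv\|_{L^2}\|\S\|_{L^\infty}\|\grad^2\S\|_{L^2}$; and $\partial_m\D:\partial_m\S$ integrates by parts to $-\D:\Delta\S$, bounded by $C\|\grad\vv\|_{L^2}\|\grad^2\S\|_{L^2}$. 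Setting $E(t):=\|\omega(t)\|_{L^2}^2+\|\grad^2\S(t)\|_{L^2}^2$, so that the conserved quantities give $\|\vv\|_{W^{1,2}}+\|\S\|_{W^{2,2}}\le C(1+E^{1/2})$, and inserting $\|\S\|_{L^\infty}\le C\bigl(1+\|\grad\S\|_{L^2}(\ln^+\|\S\|_{W^{2,2}})^{1/2}\bigr)\le C(1+(\ln(e+E))^{1/2})$ from \eqref{logineq} together with the boundedness of $\|\grad\S\|_{L^2}$, the two estimates add up to $\ddt E\le C(e+E)(1+\ln(e+E))^{1/2}$. Putting $f:=\ln(e+E)$ turns this into $f'\le C(1+f)$, whence $f(t)\le(f(0)+1)\exp(Ct)$; exponentiating gives $E(t)\le\exp\bigl[\,C(\|\vv_0\|_{W^{1,2}}+\|\S_0\|_{W^{2,2}})\exp(T)\,\bigr]$, which is the first inequality.

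The bound on $\partial_t\vv$ is then immediate: for divergence-free test fields, \eqref{sysb} gives, the pressure dropping out, $|\langle\partial_t\vv,\vvt\rangle|=\bigl|\intd\vv\otimes\vv:\grad\vvt\dx-\intd\S:\D(\vvt)\dx\bigr|\le C(\|\vv\|_{L^4}^2+\|\S\|_{L^2})\|\vvt\|_{W^{1,2}}$, so $\|\partial_t\vv\|_{(W^{1,2}_{\div})^\star}\le C(\|\vv\|_{W^{1,2}}^2+\|\S\|_{L^2})$, which lies in $L^\infty(0,T)$ by the first bound; squaring and integrating over $(0,T)$ gives the second inequality (up to a polynomial factor in $T$). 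For $\partial_t\S$ I test \eqref{sysc} with $\partial_t\S$: the regularising term yields $\|\grad\partial_t\S\|_{L^2}^2$, so the left side becomes $\|\partial_t\S\|_{L^2}^2+\|\grad\partial_t\S\|_{L^2}^2$, while on the right $\intd\D:\partial_t\S\dx=-\intd\vv\cdot\div\partial_t\S\dx\le\|\vv\|_{L^2}\|\grad\partial_t\S\|_{L^2}$ (with $\|\vv\|_{L^2}$ conserved), $\intd(\vv\cdot\grad)\S:\partial_t\S\dx=-\intd\vv\otimes\S:\grad\partial_t\S\dx\le\|\vv\|_{L^2}\|\S\|_{L^\infty}\|\grad\partial_t\S\|_{L^2}$, and $\intd(\S\W-\W\S):\partial_t\S\dx$, after integrating the $\W$-derivative by parts, splits into terms of type $\intd\vv\,\grad\S\,\partial_t\S\dx$ and $\intd\vv\,\S\,\grad\partial_t\S\dx$, handled via the planar embedding $W^{1,2}\hookrightarrow L^4$ and the interpolation $\|\grad\S\|_{L^4}\le C\|\grad\S\|_{L^2}^{1/2}\|\grad^2\S\|_{L^2}^{1/2}$. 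Absorbing all $\|\partial_t\S\|_{L^2}$- and $\|\grad\partial_t\S\|_{L^2}$-factors into the left by Young's inequality, integrating over $(0,T)$, and feeding in the conserved quantities, the first bound, and the Brezis--Gallouet bound for $\|\S\|_{L^\infty}$, produces the third inequality.

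The genuine difficulty is the first bound. Because \eqref{sysb} carries no dissipation, $\grad\vv$ is a priori only in $L^2$, and the second-derivative estimate for $\S$ unavoidably brings in $\|\S\|_{L^\infty}$, which in two dimensions is not controlled by $\|\S\|_{W^{1,2}}$. Two features rescue the argument: (i) the logarithmic Sobolev inequality \eqref{logineq}, which trades $\|\S\|_{L^\infty}$ for $\|\grad\S\|_{L^2}$ times the harmless factor $(\ln^+\|\S\|_{W^{2,2}})^{1/2}$; and (ii) the conservation of $\|\grad\S\|_{L^2}$ supplied by the regularising term $-\Delta\partial_t\S$ through \eqref{enerI} --- without it the prefactor of the logarithm would itself be uncontrolled and the Gronwall step would collapse. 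A subsidiary but pervasive point is that every integration by parts must be arranged, using $\div\vv=0$ and the orthogonality \eqref{M100} (applied to $\S$ \emph{and} to its spatial derivatives), so that no second derivative of $\vv$ --- for which there is no a priori control --- ever survives on the right-hand side.
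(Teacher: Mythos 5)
Your proposal is correct and follows essentially the same route as the paper: the decisive ingredients are identical, namely the conservation of $\|\vv\|_{L^2}^2+\|\S\|_{L^2}^2+\|\grad\S\|_{L^2}^2$ supplied by the $-\Delta\partial_t\S$ term (which keeps the prefactor $\|\grad\S\|_{L^2}$ in the Brezis--Gallouet inequality under control), the logarithmic Sobolev inequality to handle $\|\S\|_{L^\infty}$ in the second-order energy estimate, and a Gronwall step for a logarithmically superlinear ODE giving the double-exponential bound. The only departures are cosmetic: you work with the vorticity and with $\partial_m$ of \eqref{sysc} tested against $\partial_m\S$, whereas the paper tests \eqref{sysb} against $-\Delta\vv$ and \eqref{sysc} against $\Delta\S$ and thereby exploits the exact cancellation of $\intd\Delta\S:\D$ with $-\intd\D:\Delta\S$ (the two extra terms you keep are lower order and harmless), and you bound the transport commutator via $\|\S\|_{L^\infty}$ rather than via the Ladyzhenskaya interpolation $\|\nabla\S\|_{L^4}^2\le c\|\nabla\S\|_{L^2}\|\nabla^2\S\|_{L^2}$ used in the paper; for $\partial_t\S$ the paper simply bounds the forcing $g$ in $L^2$ and applies Young, without the integrations by parts you perform.
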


\subsubsection{First a priori estimate}\label{1stae}

Taking the scalar product of \eqref{sysb} and $\vv$ and integrating the result over $\Omega$ we obtain 
\begin{align*} \intd \12 \pd{|\vv|^2}{t} \dx +  \intd \12 \vv \cdot \nabla |\vv|^2  \dx + \intd \nabla p\cdot \vv \dx =  \intd\div\S \cdot \vv \dx .
\end{align*}
Integrating the last three terms by parts, using the assumption of $\Omega$-periodicity and incorporating the divergence free 
condition \eqref{sysa}, we conclude that 
\be{step1v}{\12  \ddt \|\vv\|^2_{L^2} = - \intd \S: \nabla \vv\dx = - \intd \S:\D\dx,}
where we have also employed the symmetry of $\S$.

Next, taking the scalar product of  \eqref{sysc} and $\S$ and integrating the result over $\Omega$, we get 
\begin{align*}
\intd \12 \pd{|\S|^2}{t} \dx &+ \intd \12 \vv \cdot \nabla |\S|^2 \dx + \intd(\S\W -\W\!\S):\S \dx -  \intd \Delta \pd{\S}{t}: \S \dx =  \intd \D:\S \dx.
\end{align*}
Performing integrations by parts in the second and the fourth terms (using the periodicity of functions to eliminate the boundary integrals) and using 
\eqref{sysa}, we obtain 
\begin{align*} \12 \ddt\intd|\S|^2 \dx  & +  \intd(\S\W -\W\!\S):\S \dx +  \12 \ddt \intd |\nabla \S|^2 \dx =  \intd \D:\S \dx.
\end{align*}
Since, due to symmetry of $\S$ (see also \eqref{M100}),
\be{corot0}{
\begin{split}
(\S\W - \W\!\S) :\S & = 0,
\end{split}}
we conclude that
\be{step1S}{\12  \ddt \|\S \|^2_{L^2} + \12  \ddt \|\nabla \S \|^2_{L^2} = \intd \D:\S\dx.}
Taking the sum of \eqref{step1v} and \eqref{step1S}, noticing the mutual elimination of 
their right-hand sides and integrating the result over time, we finally arrive at
\be{ae1}{\|\vv(t) \|^2_{L^2} + \|\S(t) \|^2_{L^2} + \|\nabla\S(t)\|^2_{L^2} =  \|\vv_0 \|^2_{L^2} + \|\S_0 \|^2_{L^2} + \|\nabla \S_0 \|^2_{L^2} =: C_{(1)}. 
}

\subsubsection{Second a priori estimate} \label{312}
We take the scalar product of \eqref{sysb} and $- \Delta \vv$, integrate the result over $\Omega$, perform integration by parts and deduce, 
using again the periodicity of $\Omega$, that 
\begin{align}\label{M1}
\12 \ddt \|\nabla \vv \|^2_{L^2} + \intd \nabla((\vv\cdot\nabla)\vv) :\nabla \vv \dx 
&= \intd \nabla(\div\S) : \nabla \vv \dx.
\end{align}
Since $\div\vv = 0$ implies that $\pd{v_1}{x_1} = -\pd{v_2}{x_2}$, 
the term $\intd \nabla((\vv\cdot\nabla)\vv) :\nabla \vv \dx$ 
vanishes (see \cite{Temam2001} or \cite{MalekWMV} for details). As a consequence, we conclude from \eqref{M1} that  
\be{step2v}{ \12  \ddt \|\nabla \vv \|^2_{L^2} =  \intd \nabla(\div\S) : \nabla \vv\dx = \intd \Delta \S:\D \dx .
 }
Next, we take the scalar product of \eqref{sysb} and $\Delta \S$ and integrate over $\Omega$. We obtain
\begin{align*}
\intd\pd{\S}{t}:\Delta\S \dx &+ \intd(\vv \cdot \nabla)\S:\Delta\S \dx + \intd(\S\W -\W\!\S):\Delta\S \dx -  \intd \Delta \pd{\S}{t}: \Delta\S \dx =  \intd \D:\Delta\S \dx,
\end{align*}
which leads to 
\begin{align*}
 \12 \ddt\intd|\nabla \S|^2 \dx  &\!+\! \intd (\nabla\vv \cdot \nabla\S)\!:\!\nabla\S  \dx - \! \intd(\S\W \!-\W\!\S):\Delta \S \dx  + \12   \ddt \intd \!|\Delta \S|^2 \dx = - \!\intd \!\!\D\!:\!\Delta\S \dx, 
\end{align*}
where we have used the following identity (valid for $\vv$ fulfilling $\div\vv=0$):
\be{inertia2}{
\intd \nabla((\vv\cdot\nabla)\S):\nabla\S \dx= 
\intd \left(\pd{v_k}{x_l} \pd{S_{ij}}{x_k}\pd{S_{ij}}{x_l} + \12 \vv \cdot \nabla |\nabla \S|^2 \right)\dx=: \intd (\nabla\vv \cdot \nabla\S):\nabla\S  \dx.
}
Hence, we have
\be{step2S}{
\12  \ddt \left(\|\nabla\S \|^2_{L^2} + \|\Delta\S \|^2_{L^2}\right) + \intd(\nabla\vv \cdot \nabla\S):\nabla\S \dx + \intd (\W\!\S-\S\W):\Delta \S \dx = - \intd \D:\Delta\S \dx.
}
Summing \eqref{step2v} and \eqref{step2S} and taking advantage of the cancellation of their right-hand sides, we arrive at

\be{ae2}{
\begin{split}
  \ddt \left(\|\nabla \vv \|^2_{L^2} + \|\nabla\S \|^2_{L^2} + \|\Delta\S \|^2_{L^2}\right) = 2 \intd([\nabla\vv]\nabla\S):\nabla\S \dx - 2 \intd (\W\!\S-\S\W):\Delta \S \dx.
\end{split}
}
As there seems to be no cancellation concerning the terms on the right-hand side of \eqref{ae2}, the next step consists in estimating them. 
For the first term we apply the~embedding theorem, the Ladyzhenskaya interpolation inequality $\|z\|_4 \le  c \|z\|_2^{1/2}\|\nabla z\|_2^{1/2}$ and \eqref{ae1}:
\be{Iterm2}{
\begin{split}
\intd |(\nabla\vv \cdot \nabla\S):\nabla\S| \dx  &\le \|\nabla \vv \|_{L^2}\|\nabla\S \|_{L^4}^2\le 
c \|\nabla \vv \|_{L^2}\|\nabla\S\|_{2} \|\nabla^{(2)} \S \|_{2} \le C\|\nabla \vv \|_{L^2} \|\Delta \S\|_{2}\\ &\le C \left(\|\nabla \vv \|^2_{L^2} + \|\Delta \S \|^2_{L^2}\right).
\end{split} }
In order to treat the second term we use the logarithmic Sobolev inequality \eqref{logineq} in the following way:
\be{IIterm2}{
\begin{split}
\intd &(\W\!\S-\S\W):\Delta \S \dx \le 2\|\nabla \vv \|_{L^2} \|\S \|_{L^\infty}\|\Delta\S \|_{L^2}\\
& \le C\|\nabla \vv \|_{L^2}   \left( 1 +\| \grad \S\|_{L^2}\left(\ln^+ (\|\S \|_{W^{2,2}}) \right)^{\frac{1}{2}}\right)\|\Delta\S \|_{L^2}   \\
&\le C\|\nabla \vv \|_{L^2}\|\Delta\S \|_{L^2} + C \|\nabla \vv \|_{L^2}\| \grad \S\|_{L^2}\left(\ln^+ (\|\Delta\S \|_{L^2}) \right)^{\frac{1}{2}} \|\Delta\S \|_{L^2}\\
 & \le  C\left( \|\nabla \vv \|_{L^2}^2 + \|\Delta\S \|_{L^2}^2\right) + C  \|\nabla \vv \|_{L^2}\left(\ln^+ (\|\Delta\S \|_{L^2}) \right)^{\frac{1}{2}} \|\Delta\S \|_{L^2}\\
 & \le  C\left( \|\nabla \vv \|_{L^2}^2 + \|\Delta\S \|_{L^2}^2\right) + C\ln^+ (\|\Delta\S \|_{L^2}) \|\Delta\S \|_{L^2}^2.
\end{split} }
To summarize, using \eqref{ae2}, \eqref{Iterm2} and \eqref{IIterm2}, we deduce that
\be{New}{
\begin{split}
 \ddt \left( \| \nabla \vv \|^2_{L^2}  + \| \nabla \S \|^2_{L^2} + \| \Delta \S \|^2_{L^2} \right) &\le C\left( \|\nabla \vv \|_{L^2}^2 + \|\Delta\S \|_{L^2}^2\right) + C\ln^+ (\|\Delta\S \|_{L^2}) \|\Delta\S \|_{L^2}^2.
\end{split}}
For simplicity, we increase the right-hand side of \eqref{New} by adding some positive terms and taking advantage of the fact that $\ln^+(x) \ge 1$ and obtain 
\be{New2}{
\begin{split}
 &\ddt \left( \| \nabla \vv \|^2_{L^2}  + \| \nabla \S \|^2_{L^2} + \| \Delta \S \|^2_{L^2} \right) \\
 & \le C\left( \| \nabla \vv \|^2_{L^2}  + \| \nabla \S \|^2_{L^2} + \| \Delta \S \|^2_{L^2} \right)\left( 1+ \ln^+ \left( \| \nabla \vv \|^2_{L^2}  + \| \nabla \S \|^2_{L^2} + \| \Delta \S \|^2_{L^2} \right)\right).
\end{split}}
Let us denote $Y =  \| \nabla \vv \|^2_{L^2}  + \| \nabla \S \|^2_{L^2} + \| \Delta \S \|^2_{L^2}$ and rewrite \eqref{New2} as
\be{odeineq}{ \ddt Y \le Y + Y\ln^+ Y \le 2 Y\ln^+ Y \implies \ddt Y_e \le 2 Y_e \ln Y_e,}
where $Y_e =: e + Y$ ($\ln e =1$). Consequently,
$$ \ddt \ln(Y_e) \le 2 \ln Y_e \quad \implies \quad Y_e(t) \le \exp \left[\ln (Y_e(0)) \exp (2t) \right]. $$
Recalling the definitions of $Y$ and $Y_e$, the last inequality implies that, for all $t\in [0,T]$, 
\be{odefinA}
{\left( \| \nabla \vv    \|^2_{L^2}  + \| \nabla \S    \|^2_{L^2} + \| \Delta \S   \|^2_{L^2}\right)(t)  \le \exp \left[\ln (e +  \left( \| \nabla \vv_0  \|^2_{L^2}  + \| \nabla \S_0  \|^2_{L^2} + \| \Delta \S_0  \|^2_{L^2} \right))  \exp (2T)\right]. 
}
Finally, \eqref{odefinA} and \eqref{ae1} imply that, for all $t\in [0,T]$,
\be{odefin}
{\left( \| \vv    \|_{W^{1,2}}  +\| \S   \|_{W^{2,2}}\right)(t)  \le 2 \exp \left[ C  \left( \| \vv_0 \|_{W^{1,2}}  +\| \S_0   \|_{W^{2,2}} \right)  \exp (T) \right]  \equiv C_{(2)}. 
}

\subsubsection{A priori estimates for the time derivative of $\vv$ and $\S$}\label{313}

In order to gain compactness for $\vv$, $\S$ and $\nabla\S$, we  estimate their time derivatives.  First, note that 
(for brevity, the space $L^2(0,T; W^{1,2}_{div})$ is denoted by $X$ in the following lines)
\be{timeAp}{
\begin{aligned}
  \left\Vert\pd{\vv}{t} \right\Vert_{X^*}  &= \sup_{ \|\vvt \|_{X} \le 1} \left\vert \intt\intd \pd{\vv}{t}\cdot \vvt \dx\dt \right\vert \\
  &= \sup_{ \|\vvt \|_{X} \le 1} \intt \intd |(\vv\otimes \vv):\nabla\vvt| + |\S: \nabla \vvt| \dx\dt \\
  &\le\sup_{ \|\vvt \|_{X} \le 1} \left[ \intt \|\vv\|_{L^4}^2\|\nabla \vvt\|_{L^2}\dt +\intt \|\S\|_{L^2}\|\nabla \vvt\|_{L^2}\dt\right]\\
  &\le\sup_{ \|\vvt \|_{X} \le 1} C\left[ \intt \|\vv\|_{L^2} \|\nabla\vv\|_{L^2} \|\nabla \vvt\|_{L^2}\dt \right.  + \left. \intt \|\S\|_{L^2}\|\nabla \vvt\|_{L^2}\dt\right]\\
 &\le C T^\12 \left(\|\vv\|_{L^\infty(0,T;L^2)}  \|\nabla \vv\|_{L^\infty(0,T;L^2)} + 
\|\S\|_{L^\infty(0,T;L^2)}\right). 
\end{aligned}
}
Consequently, with help of \eqref{ae1} and \eqref{odefin}, we obtain
\be{timeA}{
  \left\Vert\pd{\vv}{t} \right\Vert_{L^2(0,T;(W^{1,2}_{div})^*)}  \le  C T^\12 C_{(1)} (1+ C_{(2)}).
}
In order to estimate $\pd{\S}{t}$, we take the scalar product of \eqref{sysc} and $\pd{\S}{t}$ and 
integrate the result over $(0,T) \times \Omega$. This leads to 
\begin{align*}
\intt\intd\pd{\S}{t}:\pd{\S}{t} \dx\dt &+ \intt\intd(\vv \cdot \nabla)\S:\pd{\S}{t} \dx\dt + \intt\intd(\S\W -\W\!\S):\pd{\S}{t} \dx\dt \\&- \intt\intd \Delta \pd{\S}{t}: \pd{\S}{t} \dx\dt = \intt \intd \D:\pd{\S}{t} \dx\dt.
\end{align*}
Hence
\be{testi1}{
 \intt  \left\Vert\pd{\S}{t} \right\Vert^2_{L^2}  +   \left\Vert\pd{\nabla\S}{t} \right\Vert^2_{L^2} \dt  \le \intt\intd  g \left\vert \pd{\S}{t} \right\vert \dx\dt,
}
where
$$
g:= \left\vert(\vv \cdot \nabla)\S\right\vert  +  |\S\W -\W\!\S| +  |\D|.
$$
Since, using \eqref{ae1} and \eqref{odefin}, 
\be{testi2}{
\begin{split} \intd \left\vert(\vv \cdot \nabla)\S\right\vert ^2 \dx &\le  \intd |\vv|^2 |\nabla \S|^2 \dx \le  \|v\|_{L^4}^2 \|\nabla\S\|_{L^4}^2 \\ &\le C \|v\|_{L^2} \|\nabla\vv\|_{L^2} \|\S\|_{L^2}\|\nabla\S\|_{L^2} \le C_{(1)}^2 C_{(2)}^2.\end{split} 
}
and further, with help of the logarithmic Sobolev inequality \eqref{logineq}
\be{testi4}{
\begin{split}
\intd |\S\W  - \W\!\S|^2 \dx &\le 2 \intd |\S|^2 |\nabla \vv|^2 \dx \le \|\S\|_{L^{\infty}}^2 \|\nabla \vv|_{L^2}^2 \\
&\le C  \left( 1 +\| \grad    \S\|_{L^2}^2 \ln^+ (\|\S \|_{W^{2,2}}) \right) \|\nabla \vv|_{L^2}^2  < +\infty 
\end{split}}
due to \eqref{odefin} and finally
\be{testi3}{ 
\intd |\D|^2 \dx \le \|\vv\|_{W^{1,2}}^2 \le C_{(2)}^2,}
we conclude that $g$ is bounded in $L^2(\Omega)$ uniformly w.r.t. time $t\in [0,T]$. Consequently $g$ is bounded uniformly in $L^2(0,T; L^2(\Omega))$ 
and it then follows from \eqref{testi1}, using Young's inequality, that  \be{st}
{ \intt  \left\Vert\pd{\S}{t} \right\Vert^2_{L^2}  +   \left\Vert\pd{\nabla\S}{t} \right\Vert^2_{L^2} \dt  \le C_{(3)}. }
Referring to \eqref{ae1}, \eqref{odefin}, \eqref{timeA}, and \eqref{st} we observe  that the assertions of Proposition \ref{lem:apriori} are thus proved.

\subsection{Galerkin approximation}

We prove Theorem \ref{th:1} by means of a Galerkin approximation. Following a standard procedure we employ orthonormal countable bases, 
$(\omega^i)_{i=1}^\infty$  and $(w^i)_{i=1}^\infty$, of the spaces  $W^{1,2}_{div}$ and $W^{1,2}$, respectively. 

Let $N\ge 1$ be fixed. The functions $\vv^N(x,t) = \sum^N_{i=1}c_i^N(t)\omega^i(x)$ and $\S^N(x,t) = \sum^N_{i=1}d_i^N(t)w^i(x)$ 
are called the $N$th Galerkin approximation, if  $(c_1^N , \dots , c_N^N,$ $d_1^N , \dots , d_N^N)$ solve the system of ordinary differential equations
\begin{subequations}\label{gal}
\begin{align}
 \intd \pd{\vv^N}{t} \cdot\omega^i + \intd ((\vv^N \cdot \nabla) \vv^N)\cdot \omega^i & = - \intd \S^N : \nabla \omega^i  \quad \textrm{  for all } i = 1,\dots,N, \\ 
\begin{split}
 \intd \pd{\S^N}{t}:w^j &+ \intd (\vv^N\cdot \nabla)\S^N:w^j + \intd (\S^N\W^N-\W^N\S^N):w^j \\
&+ \intd \nabla \pd{\S^N}{t}: \nabla w^j =  \intd \D^N : w^j \quad \textrm{  for all } j = 1,\dots,N,
\end{split}\\
&c_i^N(0) =\intd \vv_0 \cdot \omega^i, \quad d_j^N(0) =\intd \S_0:w^j, \quad 1\le i,j\le N,
\end{align}
\end{subequations}
with the initial conditions $ \vv^N(x,0) = P_v^N\vv_0(x), \quad \S^N(x,0) = P_S^N \S_0(x),$ 
where $P_v^N$ and $P_S^N$ are proper orthogonal continuous projections.

The existence of continuous functions $(c_1^N , \dots , c_N^N,$ $d_1^N , \dots , d_N^N)$, that solve \eqref{gal}, 
follows from the classical Carath\'eodory theorem. The uniform estimates, that we state in the next section, enable 
us to extend the solution onto the whole time interval $[0, T]$.

\subsection{Limit $N \to \infty$}\label{sec:limpas}

Recalling the energy estimates from Section \ref{sec:apriori},  it is not difficult to see that the 
$N$th Galerkin approximation satisfies, for $N$ arbitrary, the following  estimates:
 $$ \| \vv^N    \|_{L^\infty (0, T; W^{1,2})}  +\| \S^N   \|_{L^\infty (0, T; W^{2,2})} \le C (\Omega , T,\vv_0,\S_0 ) $$
$$ \left\Vert\pd{\vv^N}{t} \right\Vert_{L^2(0,T;(W^{1,2}_{div})^*)}  \le C (\Omega , T,\vv_0,\S_0 ),$$
$$\intt  \left\Vert\pd{\S^N}{t} \right\Vert^2_{L^2}  +   \left\Vert\pd{\nabla\S^N}{t} \right\Vert^2_{L^2} \dt  \le 
C (\Omega , T,\vv_0,\S_0 ).$$
Thanks to the above estimates that are uniform with respect to $N$, sequential weak or *-weak precompactness 
of the function spaces involved, and thanks to the identification of the time derivative of a limit function 
with the limit of the time derivative via the distributional formula for the time derivative, we observe that
for a selected (not relabelled) subsequence we have 
\begin{align}\begin{aligned}
\vv^N &\rightharpoonup^*  \, {\vv}\quad &&\text{ in } L^\infty(0,T;W^{1,2}),\\
\pd{\vv^N}{t} &\rightharpoonup  \,\,\,\pd{\vv}{t} \quad &&\text{ in } L^2(0,T;(W^{1,2})^*), \\
\S^N &\rightharpoonup^*  \,{\S}\quad  &&\text{ in } L^\infty(0,T; W^{2,2}),\\
\pd{\S^N}{t} &\rightharpoonup   \,\,\,\pd{\S}{t} \quad && \text{ in } L^2(0,T; (W^{1,2})).
\end{aligned}\end{align}
Weak convergence suffices to take the limit in the linear terms in \eqref{gal}. Moreover, 
since $$ W^{1,2} \hookrightarrow\hookrightarrow L^4 \hookrightarrow L^2 = (L^2)^* \hookrightarrow (W^{1,2})^*$$ 
we get, thanks to the Aubin--Lions compactness lemma, see for example \cite{Simon1987}, the following strong convergence results
$$\S^N  \longrightarrow  {\S}\quad \text{ in } L^2(0,T;L^4),$$
$$\vv^N  \longrightarrow  {\vv}\quad \text{ in } L^2(0,T;L^4).$$
This allows us to take limit in the nonlinear terms
\be{nonpas1}{\intt \intd ((\S\W -\W\!\S) - (\S^N\W^N-\W^N\S^N)):\St \dx\dt.}
To illustrate this, let us consider one term of \eqref{nonpas1}:
\begin{align*}
\intt &\intd (\S\W - \S^N\W^N):\St \dx\dt  =  \intt \intd (\S\W^N-\S^N\W^N - \S\W^N + \S\W):\St \dx\dt \\
&=\intt \intd (\S - \S^N)\W^N:\St\dx\dt + \intt\intd\S(\W-\W^N):\St \dx\dt\\
& \le \|\S - \S^N\|_{L^2(L^4)} \|\W\|_{L^2(L^2)}\|\St\|_{L^\infty(L^4)}  + \intt\intd (\W-\W^N):\S\St \dx\dt  \longrightarrow 0.
\end{align*}
A standard (similar) approach is used in order to take the limit in the convective terms. Consequently, we can conclude that 
$\vv$, $\S$ satisfy \eqref{weak1}, \eqref{weak2}.
Moreover, the uniform estimates mentioned above and the weak lower semicontinuity of respective norms imply the  estimates 
for the functions $\vv$ and $\S$ as stated in Theorem \ref{th:1}. In addition, thanks to standard space-time interpolation 
of ${\vv} \text{ in } L^2(0,T;W^{1,2})$  and $\pd{\vv}{t} \text{ in } L^2(0,T;(W^{1,2})^*)$ we obtain $\vv \in \mathcal{C}([0,T]; L^2)$ 
(see for example \cite{Simon1987}). 
Similarly, $\S \in \mathcal{C}([0,T]; W^{3/2,2})$. Hence 
\be{tlim}{ \lim_{t \to 0^+} \|\vv(t) - \vv(0)\|_{L^2} \quad \textrm{ and } \quad  \lim_{t \to 0^+} \| \S(t) - \S(0) \|_{W^{3/2,2}}.}

To verify the statements of Theorem \ref{th:1} regarding the initial conditions, it thus remains to check that $\vv(0) = \vv_0$ and $\S (0) = \S_0$. 
For this purpose, we multiply both  equations in \eqref{gal} by  $\psi \in C^\infty ((-\infty,T])$ satisfying $\psi(T) = 0$ and 
integrate over the time interval $(0,T)$. Then, integration by parts with respect to time leads to ($i=1,\dots,N$), 
\begin{subequations}\label{time0}
\begin{align}
 -&\intt\!\!\intd \vv^N \cdot \omega^i \pd{\psi}{t} -\intt \!\!\intd \vv^N(0) \cdot \omega^i \psi(0) + \intt\!\!\intd ((\vv^N \cdot \nabla) \vv^N)\cdot \omega^i\psi  = - \intt\!\!\intd \S^N : \nabla \omega^i\psi,   \\ 
 \begin{split}
-&\intt \!\!\intd \S^N:w^j \pd{\psi}{t} -\intt \!\!\intd \S^N(0): w^j \psi(0)+ \intt\!\!\intd (\vv^N\cdot\nabla)\S^N:w^j \psi -\intt \!\!\intd \nabla \S^N:\nabla w^j \pd{\psi}{t}\\
 &\qquad -\intt \!\!\intd \nabla\S^N(0) : \nabla w^j \psi(0) + \intt\!\!\intd (\S^N\W^N-\W^N\S^N):w^j \psi =  \intt\!\!\intd \D^N : w^j \psi 
\end{split}
\end{align}
\end{subequations}
Next, letting $N \to\infty$ and referring to the completeness of $(\omega^i )_{i=1}^\infty$ and $(w^i )_{i=1}^\infty$ in $W^{1,2}_{div}$ and $W^{1,2}$, 
respectively, we get, for smooth spatial test functions $\vvt$ and $\St$, the following identities:
\begin{subequations}\label{time}
\begin{align}
 -\intt \!\! \intd \vv \cdot\vvt \pd{\psi}{t} &-\intt \!\! \intd \vv_0\cdot \vvt \psi(0) + \intt \!\!\intd ((\vv \cdot \nabla) \vv)\cdot\vvt\psi  = - \intt \!\!\intd \S : \nabla \vvt\psi,   \\ 
\begin{split}
-\intt \!\! \intd \S : \St \pd{\psi}{t} &-\intt \!\! \intd \S_0 :\St \psi(0)+ \intt \!\!\intd (\vv\cdot\nabla)\S:\St \psi + \intt \!\!\intd (\S\W-\W\!\S):\St \psi\\
 & -\intt \!\! \intd \nabla \S : \nabla \St \pd{\psi}{t} -\intt \!\! \intd \nabla\S_0 :\nabla \St \psi(0) =  \intt \!\!\intd \D : \St \psi. 
\end{split}
\end{align}
\end{subequations}
Since by properties of a generalized derivative one has 
\begin{align}
\intt \left\langle \pd{\vv}{t},\vvt \right\rangle\psi \dt &= -\intt \!\! \intd \vv\cdot \vvt \pd{\psi}{t} -\intt \!\! \intd \vv(0) \cdot\vvt \psi(0),\label{wdp1}\\
\intt \left\langle \pd{\S}{t},\St \right\rangle \dt &= -\intt \!\! \intd \S: \St \pd{\psi}{t} -\intt \!\! \intd \S(0) :\St \psi(0),\label{wdp2}
\end{align}

comparing \eqref{weak1}, \eqref{weak2} with \eqref{time} 
and choosing $\psi(0) \neq 0$ we obtain 
$$ \intd \vv(0)\cdot\vvt \dx = \intd \vv_0\cdot\vvt\dx,\quad \intd \S(0):\St \dx = \intd \S_0:\St\dx. $$
Hence $\vv(0) = \vv_0$, $\S (0) = \S_0$ a.e. and by virtue of  \eqref{tlim} we have  
$$\lim_{t \to 0^+} \|\vv(t) -\vv(0)\|_{L^2} \quad \lim_{t \to 0^+} \| \S(t) - \S(0) \|_{W^{3/2,2}}.$$
We have thus proved that $(\vv,\S)$ is a  weak solution to \eqref{sysr1}. The proof of uniqueness of the weak solution fulfilling 
the established regularity results is standard. The proof of Theorem \ref{th:1} is complete. 

\section{Proof of Theorem \ref{th:2} }\label{sec4}

\subsection{A priori estimates}\label{sec:R2apriori}

\begin{prop}\label{lem:apriori2}
Let $\vv_0 \in W^{1,2}_{\div}$, $\S_0 \in W^{2,2}$ and $X(0) := \| \nabla \vv_0 \|^2_{L^2} + \| \nabla \S_0 \|^2_{L^2}$. 
For sufficiently smooth  $\vv$ and $\S$ satisfying \eqref{sys2} assume that either $T <\frac{1}{c_0 X(0)}$ or $ 1 > c_0 X(0)$, 
where $c_0:= c_* \max\{1, \|\S_0\|_2^2\}$ and $c_*$ is an absolute constant related to certain interpolations and embeddings. Then
\be{ap21}{
\| \vv    \|^2_{L^\infty (0, T; L^{2})} + \| S   \|^2_{L^\infty (0, T; L^{2})}   +\| \S   \|^2_{L^2 (0, T; W^{1,2})} \le   \|\vv_0 \|^2_{L^2} + \|\S_0 \|^2_{L^2},
}
\be{ap22}{ \| \vv    \|^2_{L^\infty (0, T; W^{1,2})}  +\| \S   \|^2_{L^2 (0, T; W^{2,2})} \le C_1,
}
where $C_1 = CT\left(\frac{X (0)}{1 - c_0 T X (0)} \right)^2$ under the hypothesis $T <\frac{1}{c_0X(0)}$ and $C_1 = \frac{2 X(0)}{1 -  c_0 X (0)}$ under the hypothesis $1 > c_0X(0)$. Moreover,
\be{ap23}{\left\Vert\pd{\vv}{t} \right\Vert_{L^2(0,T;(W^{1,2}_{div})^*)}  \le  C ((C_1 T)^\frac12 +1)  [\|\vv_0 \|_{L^2} + \|\S_0 \|_{L^2}],
}
\be{ap24}{ \intt  \left\Vert\pd{\S}{t} \right\Vert^2_{L^2} \dt  \le C C_1 (1+ T + C_1).
}
\end{prop}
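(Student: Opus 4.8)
The plan is to prove the four a~priori bounds \eqref{ap21}--\eqref{ap24} one after another, each by testing \eqref{sys2b} and \eqref{sys2c} against an appropriate multiplier. The decisive structural difference from Theorem~\ref{th:1} is that the term $-\Delta\S$ now produces a genuine parabolic dissipation $\|\Delta\S\|_{L^2}^2$ on the left-hand side, so that the logarithmic Sobolev inequality of Proposition~\ref{proplogineq} is no longer needed and one gets instead a \emph{Riccati-type} differential inequality. First I would take the scalar product of \eqref{sys2b} with $\vv$ and of \eqref{sys2c} with $\S$: the convective terms drop by $\Omega$-periodicity together with $\div\vv=0$, the corotational term $(\S\W-\W\!\S):\S$ vanishes pointwise by symmetry of $\S$ (cf.\ \eqref{M100}), the diffusion term tested with $\S$ yields $\|\nabla\S\|_{L^2}^2$, and the two right-hand sides $\mp\intd\S:\D$ cancel upon summation; integrating in time gives \eqref{ap21}. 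Observe that this already bounds $\|\S\|_{L^\infty(0,T;L^2)}$ and $\|\S\|_{L^2(0,T;W^{1,2})}$ by the data, \emph{uniformly in $T$}.

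Next I would test \eqref{sys2b} with $-\Delta\vv$ --- in two dimensions the cubic term $\intd\nabla((\vv\cdot\nabla)\vv):\nabla\vv$ vanishes, so $\12\ddt\|\nabla\vv\|_{L^2}^2=\intd\Delta\S:\D$ --- and \eqref{sys2c} with $-\Delta\S$, handling the convective term via the identity \eqref{inertia2}. Since $\intd\Delta\S:\D-\intd\D:\Delta\S=0$, the sum reads
$$\12\ddt\left(\|\nabla\vv\|_{L^2}^2+\|\nabla\S\|_{L^2}^2\right)+\|\Delta\S\|_{L^2}^2=-\intd([\nabla\vv]\nabla\S):\nabla\S-\intd(\S\W-\W\!\S):\Delta\S.$$
I would bound the right-hand side with the two-dimensional Ladyzhenskaya inequality $\|\nabla\S\|_{L^4}^2\le c\|\nabla\S\|_{L^2}\|\Delta\S\|_{L^2}$, the Agmon/Gagliardo--Nirenberg inequality $\|\S\|_{L^\infty}\le c\|\S\|_{L^2}^{1/2}\|\Delta\S\|_{L^2}^{1/2}$ (available precisely because $\|\Delta\S\|_{L^2}$ is now controlled), the $L^2$-bound on $\S$ from \eqref{ap21}, Poincar\'e's inequality and Young's inequality, absorbing the top-order contributions into $\|\Delta\S\|_{L^2}^2$. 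Writing $X(t):=\|\nabla\vv(t)\|_{L^2}^2+\|\nabla\S(t)\|_{L^2}^2$, this produces a differential inequality of Riccati type, which may be read either as $\ddt X\le c_0X^2$ (discarding $\|\Delta\S\|_{L^2}^2$) or as $\ddt X+(1-c_0\Phi(X))\|\Delta\S\|_{L^2}^2\le0$ with $\Phi$ a positive power of $X$ vanishing at $0$, where $c_0=c_*\max\{1,\|\S_0\|_{L^2}^2\}$.

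Integrating this inequality is the core step. Under hypothesis~(i) I would compare with the ODE $y'=c_0y^2$, yielding $X(t)\le X(0)/(1-c_0X(0)t)$ on $[0,T]$ whenever $T<1/(c_0X(0))$; under hypothesis~(ii) a continuity (bootstrap) argument shows that $X$ cannot leave the range where $1-c_0\Phi(X)>0$, so $X(t)$ stays comparable to $X(0)$ for all time. In either case $\sup_{[0,T]}X\le C_1$ with $C_1$ as stated, and integrating once more bounds $\int_0^T\|\Delta\S\|_{L^2}^2$ by $C_1$, which together with \eqref{ap21} gives \eqref{ap22}. For the time derivatives I would repeat the computation of Section~\ref{313}: for $\partial_t\vv$, express it from \eqref{sys2b}, test against divergence-free $\vvt$ (so that the pressure drops out), bound $|\langle\partial_t\vv,\vvt\rangle|$ by $(c\|\vv\|_{L^2}\|\nabla\vv\|_{L^2}+\|\S\|_{L^2})\|\nabla\vvt\|_{L^2}$, and integrate in time using \eqref{ap21}--\eqref{ap22}, which gives \eqref{ap23}; for $\partial_t\S$, read it off \eqref{sys2c} as $\partial_t\S=\D+\Delta\S-(\vv\cdot\nabla)\S-(\S\W-\W\!\S)$ and estimate each summand in $L^2(0,T;L^2)$ via \eqref{ap21}, \eqref{ap22}, Ladyzhenskaya and the $L^\infty$-bound on $\S$, which gives \eqref{ap24} (only $L^2(0,T;L^2)$, weaker than in Theorem~\ref{th:1}, since there is no $-\Delta\partial_t\S$ here).

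I expect the main obstacle to be the corotational term $\intd(\S\W-\W\!\S):\Delta\S$, which is critical in two dimensions: $\|\S\|_{L^\infty}$ is only borderline-controlled by the $H^2$-norm. The dissipation $\|\Delta\S\|_{L^2}^2$ buys just enough room to absorb it through Young's inequality, but at the price of a genuinely quadratic nonlinearity --- which is exactly why nothing stronger than local-in-time or small-data existence can be expected here. Making the bootstrap under hypothesis~(ii) rigorous, and tracking the constants so that the exponents in $C_1$ and in the thresholds of (i)--(ii) come out as stated, will be the delicate part.
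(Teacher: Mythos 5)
Your proposal follows the paper's argument essentially step by step: same first energy estimate, same testing of the momentum and stress equations with $-\Delta\vv$ and $\Delta\S$, same use of the two-dimensional Ladyzhenskaya and Agmon inequalities to produce the Riccati-type inequality $\ddt X + \|\Delta\S\|_{L^2}^2 \le c_0 X^2$ together with its Poincar\'e-improved variant $\ddt X + (1 - c_0 X^2)\|\Delta\S\|_{L^2}^2 \le 0$ (so $\Phi(X)=X^2$ in your notation), and the same ODE-comparison and continuity arguments under hypotheses (i) and (ii) respectively. The bounds on the time derivatives are likewise obtained exactly as in the paper, by reading $\partial_t\vv$ off the weak form of the momentum equation and testing the stress equation against $\partial_t\S$.
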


\begin{proof}
The first energy estimate \eqref{ap21} is arrived at along the lines giving \eqref{ae1} for \eqref{sysr1}.
The second energy estimate yields (see Subsect. \ref{312} for details) 
\be{step2va}{\12  \ddt \|\nabla \vv \|^2_{L^2} =  \intd \Delta\S : \D \dx,}
\be{step2Sa}{
\begin{split}
\12 \ddt \|\nabla\S \|^2_{L^2}  + \|\Delta\S \|^2_{L^2} =  &- \intd([\nabla\vv]\nabla\S):\nabla\S \dx + \intd (\W\!\S-\S\W):\Delta \S \dx  -\intd\D:\Delta\S \dx.
\end{split}}
The convective term is estimated by means of interpolation and Young's inequalities as follows:
\be{Iterm}{
\begin{split}
\intd|(\nabla\vv \cdot \nabla\S):\nabla\S| \dx  &\le \|\nabla \vv \|_{L^2}\|\nabla\S \|_{L^4}^2 \le c\|\nabla \vv \|_{L^2}\|\nabla\S \|_{L^2} \|\Delta \S\|_{L^2} \\&\le c \|\nabla \vv \|^2_{L^2}\|\nabla\S \|^2_{L^2} + \frac14 \|\Delta \S \|^2_{L^2}.
\end{split} }
To estimate the second term on the right-hand side of  \eqref{step2Sa} we employ Agmon's inequality (in 2d) 
$\|z\|_{L^{\infty}} \le c\|z\|_{L^2}^{1/2} \|z\|_{W^{2,2}}^{1/2}$ and obtain (using also \eqref{ap21})
\be{IIterm}{
\begin{split}
\intd &(\W\!\S-\S\W):\Delta \S \dx \le 2\|\nabla \vv \|_{L^2} \|\S \|_{L^\infty}\|\Delta\S \|_{L^2} \le c\|\nabla \vv \|_{L^2} \| \S \|_{L^2}^\12\| \S \|_{W^{2,2}}^\12 \|\Delta\S \|_{L^2}   \\
 &\le c\|\nabla \vv \|_{L^2}\| \S \|_{L^2}^\12 \|\Delta\S \|_{L^2}^{\frac{3}{2}}  \le \frac{c_*}{2}\|\S_0 \|^2_{L^2} \|\nabla \vv \|^4_{L^2}+ \frac14 \|\Delta \S \|^2_{L^2}. 
\end{split} }
Summing up  \eqref{step2va} and \eqref{step2Sa}, using \eqref{Iterm} and \eqref{IIterm}, and setting $c_0:= c_* \max\{1, \|\S_0\|^2_2\}$, we conclude that 
\be{2ae1}{ \ddt \left( \| \nabla \vv \|^2_{L^2} + \| \nabla \S \|^2_{L^2} \right) + \| \Delta \S \|^2_{L^2}  \le  c_* \left(\| \S \|^2_{L^2} \|\nabla \vv \|^4_{L^2}+ \|\nabla \vv \|^2_{L^2} \|\nabla\S \|^2_{L^2}\right) \le c_0\left(\|\nabla \vv \|^2_{L^2} + \|\nabla\S \|^2_{L^2}\right)^2}
since $\| S \|_{L^2} \le \| S_0 \|_{L^2} $.
If we set $X := \left( \| \nabla \vv \|^2_{L^2} + \| \nabla \S \|^2_{L^2} \right) $ and $Y := \| \Delta \S \|^2_{L^2}$, then \eqref{2ae1} takes 
the form $\ddt X + Y \le c_0 X^2$. Since $$\ddt X  \le c_0 X^2 \Longrightarrow  X(t) \le \frac{X (0)}{1 - c_0 t X (0)} \quad \text{ and } 
\quad X(T) + \intt Y \le CT\left(\frac{X (0)}{1 -  c_0 T X (0)} \right)^2,$$
we conclude that, for $T <\frac{1}{c_0X(0)}$,
$$\vv \in L^\infty(0,T;W^{1,2}), \quad \S \in L^\infty(0,T;W^{1,2}) \cap L^2(0,T;W^{2,2}),$$
which implies the first desired estimate under the assumption on the smallness of the time interval, i.e.  $T <\frac{1}{c_0X(0)}$.

Let us now justify the same estimate under the assumption that the initial data are small in the relevant norms. 
Starting from the first inequality given in   \eqref{2ae1} and using the notation introduced above and  estimating  
$\| \nabla \S \|^2_{L^2}$ by $cY$, we obtain $\ddt X + Y \le c_0X^2 Y$, which leads to 
\be{ode1}{\ddt X + (1- c_0X^2) Y \le 0 .}

Assuming that $1 > \delta > c_0X^2(0) = \left( \| \nabla \vv_0 \|^2_{L^2} + \| \nabla \S_0 \|^2_{L^2} \right)^2$ and that $X(t)$ is continuous in time, we set 
\[
t^* := \inf \{t >0: c_0X^2 (t) = \delta \}.
\]
We will argue that $t^*$ cannot be finite. Indeed, assuming that 
$t^* \in (0, \infty)$ and integrating \eqref{ode1} over $(0,t^*)$ (noting that $t^*$ is the first possible time where $c_0X^2 (\cdot) = \delta$), 
we conclude that $ X(t^*) \le  X(0)$.
Consequently, $\delta= c_0X^2(t^*) \le  c_0X^2(0) <\delta$, which contradicts to the fact that $t^*$ is finite.  
Hence, using again \eqref{ode1}, we observe that, for any finite time $t$,
\[
X(t) + (1-\delta) \int_0^t Y \le X(0),
\]
which gives \eqref{ap22}. The value of the constant $C_1$ follows from the choice of $\delta := \frac{1+ c_0 X^2(0)}{2}$.

It remains to estimate the time derivatives of $\vv$ and $\S$. Since the systems \eqref{sysr1} and \eqref{sys2} differ only in the stress evolution 
equation, the estimation \eqref{ap23} of $\pd{\vv}{t}$ follows from \eqref{timeAp} and \eqref{ap22}.

In order to estimate $\pd{\S}{t}$, we take the scalar product of \eqref{sys2c} and $\pd{\S}{t}$ and integrate the result over $(0,T)\times\Omega$. This yields
\begin{align*}
\intt \!\!\intd\pd{\S}{t}:\pd{\S}{t} \dx\dt &+ \intt \!\!\intd(\vv \cdot \nabla)\S:\pd{\S}{t} \dx\dt + \intt \!\!\intd(\S\W -\W\!\S):\pd{\S}{t} \dx\dt \\&- \intt \!\!\intd \Delta \S: \pd{\S}{t} \dx\dt = \intt \!\! \intd \D:\pd{\S}{t} \dx\dt.
\end{align*}
Hence
\be{test2i1}{
 \intt  \left\Vert\pd{\S}{t} \right\Vert^2_{L^2} \dt  \le \intt \!\!\intd  g\left| \pd{\S}{t}\right| \dx\dt ,
}
where
$$
g:= \left\vert(\vv \cdot \nabla)\S\right\vert  +  |\S\W -\W\!\S| + |\Delta \S| +|\D|.
$$
Proceeding similarly as in Subsect. \ref{313}, it is possible to show that $g$ is bounded in $L^2(0,T; L^2)$. 
Young's inequality thus implies  \eqref{ap24}.

\end{proof}

\subsection{Galerkin approximation}

We proceed similarly as in the proof of Theorem \ref{th:1} and apply the same Galerkin scheme. 
Here, the functions $\vv^N(x,t) = \sum^N_{i=1}c_i^N(t)\omega^i(x)$ and $\S^N(x,t) = \sum^N_{i=1}d_i^N(t)w^i(x)$ 
solve the following  system of ordinary differential equations:
\begin{subequations}\label{gal2}
\begin{align}
 \intd \pd{\vv^N}{t} \cdot \omega^i + \intd (\vv^N \cdot \nabla) \vv^N)\cdot\omega^i & = - \intd \S^N : \nabla \omega^i  \quad \quad\textrm{ for all } i = 1,\dots,N, \\ 
 \begin{split}
 \intd \pd{\S^N}{t} : w^j + \intd (\vv^N\cdot\nabla)\S^N:w^j &+ \intd (\S^N\W^N-\W^N\S^N):w^j \\
+ \intd \nabla \S^N: \nabla w^j &=  \intd \D^N : w^j \quad \textrm{ for all } j = 1,\dots,N,
\end{split}\\
c_i^N(0) &=\intd \vv_0 \omega^i,\quad d_j^N(0) =\intd \S_0 w^j, \quad 1\le i,j\le N.
\end{align}
\end{subequations}

The existence of continuous functions $(c_1^N , \dots , c_N^N,$ $d_1^N , \dots , d_N^N)$, that solve \eqref{gal2}, 
follows from the classical Caratheodory theorem.

\subsection{Uniform estimates and Limit $N\to \infty$}

At this juncture, we can ``repeat'' the a priori estimates established for the sufficiently regular solution discussed in Subsect. \ref{sec:R2apriori}. 
Similarly as in the proof of Theorem \ref{th:1} we conclude corresponding weak and *-weak convergences for selected but not relabelled subsequences 
of $\{\vv^N\}$ and $\{\S^N\}$. 
Repeating the arguments from Section \ref{sec:limpas} we also obtain the following strong convergence results:
$$\vv^N  \longrightarrow  {\vv}\quad \text{ in } L^2(0,T;L^4),$$
$$\S^N  \longrightarrow  {\S}\quad \text{ in } L^2(0,T;L^4).$$
Consequently, we can take the limit in all terms of the governing equations and conclude that $\vv$ and $\S$ satisfy \eqref{weak1th2} and \eqref{weak2th2}.

Moreover, thanks to the space-time interpolation of ${\vv} \text{ in } L^2(0,T;W^{1,2})$  and $\pd{\vv}{t} \text{ in } L^2(0,T;(W^{1,2})^*)$ 
we obtain $\vv \in \mathcal{C}([0,T]; L^2)$. Similarly, $\S \in \mathcal{C}([0,T]; W^{1,2})$. Hence 
\be{tlimth2}{ \lim_{t \to 0^+} \|\vv(t) - \vv(0)\|_{L^2} \quad \lim_{t \to 0^+} \| \S(t) - \S(0) \|_{W^{1,2}}.}

The procedure of checking the attainment of the initial conditions proceeds in a similar way as in Section \ref{sec:limpas}: 
we have to show that $\vv(0) = \vv_0$ and $\S(0) = \S_0$. To prove these, we multiply \eqref{gal2} by  $\psi \in C^\infty ([0,T])$ 
satisfying $\psi(T) = 0$ and integrate over $(0,t)$. Then we integrate by parts w.r.t. time and take the limit $N \to\infty$ as in \eqref{time0}. Due to the completeness of the bases $(\omega^i )_{i=1}^\infty$ and $(w^i )_{i=1}^\infty$ in $W^{1,2}_{div}$ and $W^{1,2}$, we get for smooth spatial test functions $\vvt$ and $\St$ the following identities:
\begin{subequations}\label{timeth2}
\begin{align}
 -\intt \!\! \intd \vv \vvt \cdot \pd{\psi}{t} &-\intt \!\! \intd \vv_0 \cdot\vvt \psi(0) + \intt \!\!\intd ((\vv \cdot \nabla) \vv)\cdot\vvt\psi  = - \intt \!\!\intd \S : \nabla \vvt\psi ,  \\ 
\begin{split}
-\intt \!\! \intd \S: \St \pd{\psi}{t} &-\intt \!\! \intd \S_0: \St \psi(0)+ \intt \!\!\intd (\vv\cdot\nabla)\S:\St \psi \\
 & + \intt \!\!\intd (\S\W-\W\!\S):\St \psi +\intt \!\! \intd \nabla \S :\nabla \St \psi =  \intt \!\!\intd \D : \St \psi .
\end{split}
\end{align}
\end{subequations}
Using the relations \eqref{wdp1} and \eqref{wdp2} and comparing \eqref{weak1th2} and \eqref{weak2th2} with \eqref{timeth2}, 
we conclude (by~choosing $\psi(0) \neq 0$) that 
$$ \intd \vv(0)\cdot\vvt \dx = \intd \vv_0\cdot\vvt\dx,\quad \intd \S(0):\St \dx = \intd \S_0:\St\dx. $$
Hence $\vv(0) = \vv_0$, $\S (0) = \S_0$ a.e. and owing to \eqref{tlimth2} we observe that 
$$ \lim_{t \to 0^+} \|\vv(t) - \vv(0)\|_{L^2}, \quad \lim_{t \to 0^+} \| \S(t) - \S(0) \|_{W^{1,2}}.$$
We have proved that $(\vv,\S)$ is a  weak solution to \eqref{sys2} and have completed the proof of Theorem \ref{th:2}, since the proof of uniqueness of the weak solution fulfilling 
the established regularity results is again standard.

\section*{Appendix - Main difficulty in proving well-posedness of \eqref{syso} }\label{app:md}

Let us discuss the fundamental difficulties that obstruct the proof of local-in-time well-posedness of the system \eqref{syso}. 
We are interested in solving the spatially periodic problem. We can treat the equation \eqref{sysob} as an Euler equation with the right-hand side $\div\S$. 
Therefore, we apply the strategy that was used to show the local-in-time existence of a weak solution to the Euler equation, namely we perform first a priori 
estimates and then the estimates for the third derivatives.  

We take the scalar product of \eqref{sysob} and the velocity $\vv$ and integrate over $(0,t)\times \Omega$: 
$$\int_0^t \!\! \intd \pd{\vv}{t}\cdot \vv \dx \dt + \int_0^t \!\! \intd (\vv\cdot\nabla)\vv \cdot \vv  \dx\dt + \int_0^t \!\!\intd \nabla p\cdot \vv \dx\dt = \int_0^t \!\! \intd\div\S \cdot \vv \dx\dt,$$
which leads to
\be{appstep1v}{\12 \int_0^t \ddt \|\vv\|^2_{L^2} \dt= - \int_0^t \!\!\intd \S: \nabla \vv\dx\dt = -\int_0^t \!\! \intd \S:\D\dx\dt.}
Further, we take the scalar product of \eqref{sysod} by $\S$ and integrate over $(0,t)\times \Omega$:
$$
\int_0^t \!\!\intd\pd{\S}{t}:\S \dx\dt + \int_0^t \!\!\intd(\vv \cdot \nabla)\S:\S \dx\dt + \int_0^t \!\!\intd(\S\W -\W\!\S):\S \dx\dt  = \int_0^t \!\! \intd \D:\S \dx\dt,$$
which implies that 
\be{appstep1S}{\12 \intt \ddt \|\S \|^2_{L^2}\dt = \intt\!\!\intd \D:\S\dx\dt.}
We sum up \eqref{appstep1v} and \eqref{appstep1S} and obtain
\be{appae1}{\|\vv(t) \|^2_{L^2} + \|\S(t) \|^2_{L^2} =  \|\vv(0) \|^2_{L^2} + \|\S(0) \|^2_{L^2} \text{ for all } t \in (0,T).}

Equation \eqref{ae1} gives uniform a priori estimates on $\vv$ and $\S$ in the following function spaces 
$$ \vv \in L^\infty(0,T;L^2),\quad \S \in  L^\infty(0,T;L^2).$$
Note that unlike the Oldroyd-type models the first a-priori estimate does not give any bound on the gradient of the velocity.

Next, we proceed to the estimates on the third spatial derivatives of $\vv$ and $\S$. We apply $D^3$ to \eqref{sysob} and take the scalar 
product with $D^3\vv$. After integration over $(0,T)\times\Omega$, we obtain
\begin{align*}
\intt\!\! \intd D^3\pd{\vv}{t}\cdot D^3\vv \dx\dt &+ \intt\!\! \intd D^3((\vv\cdot\nabla)\vv) \cdot D^3\vv  \dx\dt \\ &+ \intt\!\!\intd D^3\nabla p\cdot D^3\vv \dx\dt = \intt\!\! \intd D^3\div\S \cdot D^3\vv \dx\dt,\\
\12 \intt \ddt \|D^3 \vv\|^2_{L^2} \dt  &+ \intt\!\! \intd D^3((\vv\cdot\nabla)\vv) \cdot D^3\vv  \dx\dt = \intt\!\! \intd D^3\div\S \cdot D^3\vv \dx\dt.
 \end{align*}

Analogously, we apply $D^3$ to \eqref{sysod} and take the scalar product of the result with $D^3\S$. This, after integration over  $(0,T)\times\Omega$, leads to
\begin{align*}
\intt\!\!\intd D^3\pd{\S}{t}:D^3\S \dx\dt &+ \intt\!\!\intd D^3((\vv \cdot \nabla)\S):D^3\S \dx\dt \\ &+ \intt\!\!\intd D^3(\S\W -\W\!\S):D^3\S \dx\dt  = \intt\!\! \intd D^3\D:D^3\S \dx\dt,\\
\12 \intt \ddt \|D^3\S \|^2_{L^2}\dt &+ \intt \!\!\intd D^3((\vv \cdot \nabla)\S):D^3\S \dx\dt \\ &+ \intt \!\!\intd D^3(\S\W -\W\!\S):D^3\S \dx\dt  = \intt \!\! \intd D^3\D:D^3\S \dx\dt.
\end{align*}

The convective terms are estimated in the standard way. An important feature is the cancellation of the highest order term, as in \eqref{inertia2}: 
$$\intt \!\! \intd D^3((\vv\cdot\nabla)\vv) \cdot D^3\vv  \dx\dt \le C \intt \|D^3 \vv\|^3_{L^2} \dt,$$
$$\intt \!\!\intd D^3((\vv \cdot \nabla)\S):D^3\S \dx\dt \le  C \intt \|D^3\vv\|_{L^2}\|D^3\S\|^2_{L^2}\dt. $$
The obstacle is the co-rotational term $\intt \!\!\intd D^3(\S\W -\W\!\S):D^3\S \dx\dt$, or, more precisely, 
one of its terms, $\intt \!\!\intd (\S D^3\W -D^3\W\!\S):D^3\S \dx\dt$, which contains fourth derivatives of the velocity. 
Even if we restrict ourselves to planar flows, the difficulty persists. Indeed, in two dimensions we have
$$ \S =  \left(\begin{array}{cc}
S_{11} & S_{12} \\ 
S_{12} & S_{22}
\end{array}\right) ,\quad  \W = \12\left(\gradv -\gradv^T \right) = \12 \left(\begin{array}{cc}
0 & \pd{v_1}{x_2} - \pd{v_2}{x_1} \\ 
\pd{v_2}{x_1} - \pd{v_1}{x_2} & 0
\end{array}\right) = \12 \left(\begin{array}{cc}
0 & -\omega \\ 
\omega & 0\end{array}\right) ,$$
where $\omega = \pd{v_1}{x_2} - \pd{v_2}{x_1} $. Hence, we get
$$ \S\W = \12\omega\left(\begin{array}{cc}
S_{12} & -S_{11} \\ 
S_{22} & -S_{12}
\end{array}\right),\quad \W\!\S = \12\omega\left(\begin{array}{cc}
-S_{12} & -S_{22} \\ 
S_{11} & S_{12}
\end{array}\right),$$
$$ \S\W - \W\!\S = \12\omega\left(\begin{array}{cc}
2S_{12} & S_{22}-S_{11} \\ 
S_{22} - S_{11} & -2S_{12}
\end{array}\right) =: \12\omega \tilde{\S}. $$
Consequently, the most difficult term takes the form
$$ (\S D^3\W -D^3\W\!\S):D^3\S = \12 (D^3\omega) \tilde{\S}:D^3\S.$$

Despite our efforts to incorporate the equation for $\omega$ and its third derivatives, or to use the stream function, 
we were not able to control the last term by $D^3\vv$ and $D^3 \S$.

\section*{Acknowledgements}
J. Burczak was supported by MNiSW "Mobilność Plus" grant 1289/MOB/IV/2015/0. J.~M\'alek and P.~Minakowski acknowledge 
the support of the project P107/12/0121 financed by the Grant Agency of the Czech Republic. J.~M\'alek was also 
supported by the ERC-CZ project LL1202 financed by MSMT, Czech Republic. P.~Minakowski acknowledges also the support 
of National Science Centre of~Poland, grant number 2012/07/N/ST1/03369. 
The authors thank Piotr Gwiazda for general discussions on this topic.


\begin{thebibliography}{10}

\bibitem{adams}
R.~Adams and J.~Fournier.
\newblock {\em Sobolev spaces}, volume 140 of {\em Pure and Applied Mathematics
  (Amsterdam)}.
\newblock Elsevier/Academic Press, Amsterdam, second edition, 2003.

\bibitem{Lu2016}
J.~W. {Barrett}, Y.~{Lu}, and E.~{S{\"u}li}.
\newblock {Existence of large-data finite-energy global weak solutions to a
  compressible Oldroyd-B model}.
\newblock {\em Communications in Mathematical Sciences (Accepted, 22 January
  2017) Available from arXiv:1608.04229}, 2016.

\bibitem{Barrett2011}
J.W. Barrett and S.~Boyaval.
\newblock Existence and approximation of a (regularized) {O}ldroyd-{B} model.
\newblock {\em Mathematical Models and Methods in Applied Sciences},
  21:1783--1837, 2011.

\bibitem{Suli2011}
J.W. Barrett and E.~S\"uli.
\newblock Existence and equilibration of global weak solutions to kinetic
  models for dilute polymers {I}: Finitely extensible nonlinear bead-spring
  chains.
\newblock {\em Mathematical Models and Methods in Applied Sciences},
  21(06):1211--1289, 2011.

\bibitem{Majdoub2013}
O.~Bejaoui and M.~Majdoub.
\newblock Global weak solutions for some {O}ldroyd models.
\newblock {\em Journal of Differential Equations}, 254(2):660 -- 685, 2013.

\bibitem{Brezis1979}
H.~Brezis and T.~Gallouet.
\newblock Nonlinear {S}chrödinger evolution equations.
\newblock {\em Nonlinear Analysis: Theory, Methods \& Applications}, 4(4):677
  -- 681, 1980.

\bibitem{Brezis1980}
H.~Brezis and S.~Wainger.
\newblock A note on limiting cases of {S}obolev embeddings and convolution
  inequalities.
\newblock {\em Communications in Partial Differential Equations},
  5(7):773--789, 1980.

\bibitem{Chemin2001}
J.~Chemin and N.~Masmoudi.
\newblock About lifespan of regular solutions of equations related to
  viscoelastic fluids.
\newblock {\em SIAM Journal on Mathematical Analysis}, 33(1):84--112, 2001.

\bibitem{ChioEFKreml2015}
E.~Chiodaroli, E.~Feireisl, and O.~Kreml.
\newblock On the weak solutions to the equations of a compressible heat
  conducting gas.
\newblock {\em Ann. Inst. H. Poincar\'e Anal. Non Lin\'eaire}, 32(1):225--243,
  2015.

\bibitem{Chupin2015}
L.~Chupin and S.~Martin.
\newblock Stationary {O}ldroyd model with diffusive stress: Mathematical
  analysis of the model and vanishing diffusion process.
\newblock {\em Journal of Non-Newtonian Fluid Mechanics}, 218:27 -- 39, 2015.

\bibitem{Constantin2012}
P.~Constantin and M.~Kliegl.
\newblock Note on global regularity for two-dimensional {O}ldroyd-{B} fluids
  with diffusive stress.
\newblock {\em Archive for Rational Mechanics and Analysis}, 206(3):725--740,
  2012.

\bibitem{LellisSzekehylidi2011}
C.~De~Lellis and L.~Sz\'ekelyhidi, Jr.
\newblock On admissibility criteria for weak solutions of the {E}uler
  equations.
\newblock {\em Arch. Ration. Mech. Anal.}, 195(1):225--260, 2010.

\bibitem{LellisSzekehylidi2013}
C.~De~Lellis and L.~Sz\'ekelyhidi, Jr.
\newblock Dissipative continuous {E}uler flows.
\newblock {\em Invent. Math.}, 193(2):377--407, 2013.

\bibitem{Fang2013}
D.~Fang and R.~Zi.
\newblock Strong solutions of 3d compressible {O}ldroyd-{B} fluids.
\newblock {\em Mathematical Methods in the Applied Sciences},
  36(11):1423--1439, 2013.

\bibitem{Cara2002}
E.~Fernández-Cara, F.~Guillén, and R.R. Ortega.
\newblock Mathematical modeling and analysis of viscoelastic fluids of the
  {O}ldroyd kind.
\newblock In {\em Handbook of Numerical Analysis}, volume~8 of {\em Handbook of
  Numerical Analysis}, pages 543 -- 660. Elsevier, 2002.

\bibitem{Temam2001}
C.~Foias, O.~Manley, R.~Rosa, and R.~Temam.
\newblock {\em Navier-Stokes Equations and Turbulence}.
\newblock Cambridge University Press, 2001.

\bibitem{Guillope1990}
C.~Guillop{\'e} and J.C. Saut.
\newblock Existence results for the flow of viscoelastic fluids with a
  differential constitutive law.
\newblock {\em Nonlinear Anal.}, 15(9):849--869, October 1990.

\bibitem{Gunther1927}
N.~Gunther.
\newblock On the motion of fluid in a moving container. izvestia akademia nauk
  ussr.
\newblock {\em Seriya Fizicheskaya-Mathematica 20}, 1323-1348:1503--1532, 1927.

\bibitem{Kato1967}
T.~Kato.
\newblock On classical solutions of the two-dimensional non-stationary {E}uler
  equation.
\newblock {\em Archive for Rational Mechanics and Analysis}, 25(3):188--200,
  1967.

\bibitem{Kozono2002}
H.~Kozono, T.~Ogawa, and Y.~Taniuchi.
\newblock The critical {S}obolev inequalities in {B}esov spaces and regularity
  criterion to some semi-linear evolution equations.
\newblock {\em Mathematische Zeitschrift}, 242(2):251--278, 2002.

\bibitem{Kozono2000}
H.~Kozono and Y.~Taniuchi.
\newblock Limiting case of the {S}obolev inequality in {BMO}, with application
  to the {E}uler equations.
\newblock {\em Communications in Mathematical Physics}, 214(1):191--200, 2000.

\bibitem{Minak2016}
J.~Kratochvíl, J.~Málek, and P.~Minakowski.
\newblock A {G}ibbs-potential-based framework for ideal plasticity of
  crystalline solids treated as a material flow through an adjustable crystal
  lattice space and its application to three-dimensional micropillar
  compression.
\newblock {\em International Journal of Plasticity}, 87:114 -- 129, 2016.

\bibitem{Lei2006}
Z.~Lei.
\newblock Global existence of classical solutions for some {O}ldroyd-{B} model
  via the incompressible limit.
\newblock {\em Chinese Annals of Mathematics, Series B}, 27(5):565--580, 2006.

\bibitem{Liu2008}
Z.~Lei, Ch. Liu, and Y.~Zhou.
\newblock Global solutions for incompressible viscoelastic fluids.
\newblock {\em Archive for Rational Mechanics and Analysis}, 188(3):371--398,
  2008.

\bibitem{Lichtenstein1925}
L.~Lichtenstein.
\newblock Über einige {E}xistenzprobleme der {H}ydrodynamik homogener,
  unzusammendrückbarer, reibungsloser {F}lüssigkeiten und die
  {H}elmholtzschen {W}irbelsätze.
\newblock {\em Mathematische Zeitschrift}, 23(1):89--154, 1925.

\bibitem{Liu2005}
F.-H. Lin, Ch. Liu, and P.~Zhang.
\newblock On hydrodynamics of viscoelastic fluids.
\newblock {\em Communications on Pure and Applied Mathematics},
  58(11):1437--1471, 2005.

\bibitem{Lions2000}
P.L. Lions and N.~Masmoudi.
\newblock Global solutions for some {O}ldroyd models of non-{N}ewtonian flows.
\newblock {\em Chinese Annals of Mathematics. Series B}, 21(2):131--146, 2000.
\newblock cited By (since 1996)99.

\bibitem{Mizerova2015}
M.~Lukáčová-Medvid’ová, H.~Mizerová, and Š. Nečasová.
\newblock Global existence and uniqueness result for the diffusive {P}eterlin
  viscoelastic model.
\newblock {\em Nonlinear Analysis: Theory, Methods \& Applications}, 120(0):154
  -- 170, 2015.

\bibitem{Pulvirenti1994}
C.~Marchioro and M.~Pulvirenti.
\newblock {\em Mathematical theory of incompressible non-viscous fluids},
  volume~96 of {\em Applied mathematical sciences}.
\newblock Springer-Verlag, New York, 1994.

\bibitem{MalekWMV}
J.~Málek, J.~Nečas, M.~Rokyta, and M.~Růžička.
\newblock {\em Weak and measure-valued solutions to evolutionary {PDE}s},
  volume~13 of {\em Applied Mathematics and Mathematical Computation}.
\newblock Chapman and Hall, London, 1996.

\bibitem{MaRa2005}
J.~Málek and K.R. Rajagopal.
\newblock Chapter 5 - mathematical issues concerning the {N}avier–{S}tokes
  equations and some of its generalizations.
\newblock In C.M. Dafermos and E.~Feireisl, editors, {\em Handbook of
  Differential Equations Evolutionary Equations}, volume~2 of {\em Handbook of
  Differential Equations: Evolutionary Equations}, pages 371 -- 459.
  North-Holland, 2005.

\bibitem{RajA}
K.R. Rajagopal.
\newblock The elasticity of elasticity.
\newblock {\em Z. Angew. Math. Phys.}, 58(2):309--317, 2007.

\bibitem{RajB}
K.R. Rajagopal.
\newblock On a new class of models in elasticity.
\newblock {\em Math. Comput. Appl.}, 15(4):506--528, 2010.

\bibitem{RajC}
K.R. Rajagopal.
\newblock Conspectus of concepts of elasticity.
\newblock {\em Math. Mech. Solids}, 16(5):536--562, 2011.

\bibitem{Rajagopal2009}
K.R. {Rajagopal} and A.R. {Srinivasa}.
\newblock {On a class of non-dissipative materials that are not hyperelastic}.
\newblock {\em Royal Society of London Proceedings Series A}, 465:493--500,
  February 2009.

\bibitem{Rajagopal2011}
K.R. Rajagopal and A.R. Srinivasa.
\newblock A {G}ibbs-potential-based formulation for obtaining the response
  functions for a class of viscoelastic materials.
\newblock {\em Proc. R. Soc. A}, 467(2125):39--58, 2011.

\bibitem{Simon1987}
J.~Simon.
\newblock Compact sets in the space ${L}^p (0, t ; {B})$.
\newblock {\em Ann. Mat. Pura Appl.}, 146:65--96, 1987.

\bibitem{Stein1970}
E.M. Stein.
\newblock {\em Singular Integrals and Differentiability Properties of
  Functions}.
\newblock Princeton mathematical series 30. Princeton University Press, 1970.

\bibitem{Beris1995}
R.~Sureshkumar and A.N. Beris.
\newblock Effect of artificial stress diffusivity on the stability of numerical
  calculations and the flow dynamics of time-dependent viscoelastic flows.
\newblock {\em Journal of Non-Newtonian Fluid Mechanics}, 60(1):53 -- 80, 1995.

\bibitem{Truesdell1955}
C.A. Truesdell.
\newblock Hypo-elasticity.
\newblock {\em J. Ration. Mech. Anal.}, 4:323–425, 1955.

\bibitem{Wiedemann2011}
E.~Wiedemann.
\newblock Existence of weak solutions for the incompressible {E}uler equations.
\newblock {\em Ann. Inst. H. Poincar\'e Anal. Non Lin\'eaire}, 28(5):727--730,
  2011.

\bibitem{Wolibner1933}
W.~Wolibner.
\newblock Un theorème sur l'existence du mouvement plan d'un fluide parfait,
  homogene, incompressible, pendant un temps infiniment long.
\newblock {\em Mathematische Zeitschrift}, 37:698--726, 1933.

\end{thebibliography}
\end{document}